\numberwithin{equation}{section}
\newtheorem{theorem}{Theorem}[section]
\newtheorem{proposition}[theorem]{Proposition}
\newtheorem{lemma}[theorem]{Lemma}
\newtheorem{remark}[theorem]{Remark}
\newcommand{\marginlabel}[1]{\mbox{}\marginpar{\raggedleft\hspace{0pt}\tiny#1}}
\newcommand{\nor}{\Arrowvert}
\def\up{u_p}
\def\e{{\varepsilon}}
\def\d{\delta}
\def\l{{\lambda}}
\def\a{{\alpha}}
\def\de{\partial}
\def\lra{\longrightarrow}
\def\cC{\mathcal{C}}
\newcommand{\R}{\mathbb{R}}
\newcommand{\N}{\mathbb{N}}
\newcommand{\B}{B}
\newcommand{\n}{\mathop{N}}
\newif\ifcomment \commentfalse
\def\commentON{\commenttrue}
\long\outer\def\BC#1\EC{\ifcomment \sloppy \par \# \ldots\dotfill
{\em #1} \dotfill \# \par \fi } \commentON
\newcommand{\remove}[1]{}
\title{Nonradial sign changing solutions to Lane Emden equation}
\author[A.~L.~Amadori, F.~Gladiali]{Anna Lisa Amadori$^\dag$ and Francesca Gladiali$^\ddag$}
\thanks{The authors are members of the Gruppo Nazionale per
l'Analisi Matematica, la Probabilit\'a e le loro Applicazioni (GNAMPA)
of the Istituto Nazionale di Alta Matematica (INdAM). The second author is supported by PRIN-2009-WRJ3W7 grant}
\date{\today}
\address{$\dag$ Dipartimento di Scienze Applicate, Universit\`a di Napoli ``Parthenope", Centro Direzionale di Napoli, Isola C4, 80143 Napoli, Italy. \texttt{annalisa.amadori@uniparthenope.it}}
\address{$\ddag$ Matematica e Fisica, Polcoming, Universit\`a di Sassari, via Piandanna 4, 07100 Sassari, Italy. \texttt{fgladiali@uniss.it}}
\begin{document}
\begin{abstract}
  In this paper we prove the existence of continua of nonradial solutions for the Lane-Emden equation. In a first result we show that there are infinitely many global continua detaching from the curve of radial solutions with any prescribed number of nodal zones.  Next, using the fixed point index in cone, we produce nonradial solutions with a new type of symmetry. This result also applies to solutions with fixed signed, showing that the set of solutions to the Lane Emden problem has a very rich and complex structure.
  
{\bf Keywords:} semilinear elliptic equations, nodal solutions, bifurcation.

{\bf AMS Subject Classifications:} 35J91, 35B05
\end{abstract}

\maketitle

\section{Introduction}
This work deals with the Lane-Emden problem 
\begin{equation} \label{eq1}
\left\{\begin{array}{ll}
-\Delta u = |u|^{p-1} u \qquad & \text{ in } A , \\
u= 0 & \text{ on } \partial A,
\end{array} \right.
\end{equation} 
where $A=\{x\in\R^{\n} \, : a<|x|<b\}$ is an annulus of $\R^{\n}$ with $\n\ge 2$ and $p>1$.
Problem \eqref{eq1} is said subcritical in dimension $2$ for any value of $p$ and in dimension $N\geq 3$ when $1<p<\frac{\n+2}{\n-2}$. In this case 
solutions can be found as critical points of the functional 
\begin{equation}\label{F}
F(u):=\frac 12\int_A |\nabla u|^2-\frac 1{p+1}\int_A |u|^{p+1}
\end{equation}
on the space $H^1_0(A)$ using the compact embedding of $L^{p+1}(A)$ into $H^1_0(A)$ for $p+1<\frac{2\n}{\n-2}$.

When $\n\geq 3$ and $p\geq \frac{\n+2}{\n-2}$ the problem is said critical or supercritical and variational methods or critical points theory cannot be used to find solutions because the functional $F$ is not well defined on the space $H^1_0(A)$ nor compact.
Anyway this issue is well settled in the framework of radial solutions
since the embedding of $L^{p+1}_{\mathit{rad}}(A)$ into  $ H^1_{0,\mathit{rad}}(A) $ is compact for every value $p$  and any dimension $\n$. This is indeed a consequence of the radial Lemma (see for example \cite{Ni}) and problem \eqref{eq1} always admits a positive solution in the space $ H^1_{0,\mathit{rad}}(A)$. Further sign changing solutions in $ H^1_{0,\mathit{rad}}(A)$ can be found using the minimization procedure introduced in \cite{BW}, see also \cite{AGG16} where this procedure is applied to problem \eqref{eq1}.

In particular for any $m\in\N$, there are exactly two radial solutions to \eqref{eq1} which have exactly $m$ nodal zones, and they are one the opposite of the other, see \cite{NN}.

So for any $p>1$ and $m\in \N$ there is only one radial solution to \eqref{eq1} which has $m$ nodal zones and is positive in the first one (i.e. has positive derivative in $|x|=a$): we shall denote it by $u^m_p$ and introduce the curve
\[
\mathcal{S}^m:=\big\{  (p,u^m_p)\ : \ p\in(1,+\infty) \big\}
.\] 
In the following we shall consider the number of nodal zones $m$ fixed, therefore we shall omit the dependence by $m$, when it does not give rise ambiguity. 
\\
In this paper we find nonradial solutions which spread from $\mathcal S$ by using the bifurcation theory: a powerful tool for dealing with supercritical problems, which has already been applied to solutions with fixed-sign in \cite{GGPS11}. 
To this aim a necessary condition  is a change of the Morse index of $u_p$. 
The computation of the Morse index of nodal radial solutions to the Lane Emden problem has been object of various studies, recently, for instance in the unit ball it has been exactly computed for large values of $p$ in \cite{dMIP16} when $\n=2$ and for almost critical values of $p$ in \cite{dMIP} when $\n\ge 3$.
About annular type domains, the paper \cite{AGG16} gives a characterization of the Morse index in terms of a related Sturm-Liouville problem and describes its asymptotic behaviour as $p\to 1$ and $+\infty$. This yields, incidentally, that there are infinitely many values of $p$ where the Morse index does change, although  there still are nontrivial difficulties in deducing actual bifurcation because there is no evidence that such change is odd.  Since our problem can be supercritical no variational structure can be used and only an {\em{odd}} change in the Morse index can produce a bifurcation result.
Such difficulties  do not present when dealing with positive solutions, because in that case  only the first eigenvalue of a radial associated problem can produce degeneracy
and this assures that the related eigenfunctions of the Laplace-Beltrami operator which are $O(\n-1)$-invariant span a  one dimensional space. This argument is the key for the closing part of the proof of \cite[Theorem 1.7]{GGPS11} and applies also to more general (non-autonomous) problems, as in \cite{GGN13}, \cite{AG}, or \cite{AG16}. 
With this approach one can construct nonradial solutions which still are $O(\n-1)$-invariant.
Unfortunately it does not apply to nodal solutions, because the structure of degeneracy is much more involved and it is needed to take into account the eigenfunctions of the Laplace-Beltrami operator related to many different eigenvalues: in general this could produce an even change in the Morse index.
In the present paper we overcome this obstacle and present two slightly different bifurcation results.
In the first one bifurcation is obtained by the classical Leray-Shauder degree approach, after  showing that the change of the Morse index is actually odd or in a suitable linear space which varies according to $p$ and to the dimension $\n$. It can be the entire $C^{1,\gamma}_0(A)$ either or a subspace of type
\begin{equation}\label{X}
  X^n : =  \left\{u\in     C^{1,\gamma}_0(A)  \, : \, u(r,\varphi,\theta) \hbox{ is } {2\pi}/n  \hbox{ periodic  and even w.r.t. }\varphi\right\}.
\end{equation}  
Here we have denoted by $(r,\varphi,\theta)$  the spherical coordinates in $\R^{\n}$  with $r=|x|\in[0,+\infty)$, $\varphi\in[0,2\pi]$ and $\theta =(\theta_1,\dots\theta_{\n\!-\!2})\in (0,\pi)^{\n-2}$:
  \begin{equation}\label{sphericalcoord}\begin{array}{lr}
      x_1=r \cos\varphi \prod\limits_{k=1}^{\n\!-\!2}\sin\theta_k  , & \\
      x_2= r \sin\varphi \prod\limits_{k=1}^{\n\!-\!2}\sin\theta_k  , & \\
      x_{k} = r \cos \theta_{k\!-\!2} \prod\limits_{h=k-1}^{\n\!-\!2}\sin\theta_h &  \qquad \mbox{ as } k=3,\dots \n-1 , \\
      x_{\n}= r \cos \theta_{\n\!-\!2}. & \end{array}
    \end{equation}
In doing so we give a detailed description of which eigenfunctions of the Laplace-Beltrami operator come into play, which can turn useful also for other applications. It has to be emphasized that the method used here applies to any kind of solution (with fixed sign or sign-changing) and can be used also with other supercritical nonlinear terms: for instance in  \cite{AGnodalHenon} it is applied to the non-autonomous nonlinearity arising in the Henon equation.
The precise terms state as follows
\begin{theorem}\label{teo:bifurcation}
  Let $m\ge 2$. There is a sequence $p_n\to+\infty$ such that a nonradial bifurcation occurs at $(p_n,u^m_{p_n})$. The bifurcation is global and the Rabinowitz alternative holds.
  \end{theorem}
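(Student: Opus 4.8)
The plan is to frame \eqref{eq1} as a bifurcation problem from the curve $\mathcal S=\mathcal S^m$ and to invoke the global bifurcation theorem of Rabinowitz, the whole difficulty being to exhibit an \emph{odd} change of the Leray--Schauder degree at infinitely many $p_n\to+\infty$. For $u\in C^{1,\gamma}_0(A)$ set $T_p(u):=(-\Delta)^{-1}\big(|u|^{p-1}u\big)$ with zero boundary datum. By Schauder estimates $T_p$ maps $C^{1,\gamma}_0(A)$ compactly into itself and is jointly continuous in $(p,u)$ \emph{whether or not $p$ is subcritical}, and $u$ solves \eqref{eq1} iff $u=T_p(u)$. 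Since $u^m_p$ is nondegenerate within the radial class, $\mathcal S$ is a $C^1$ curve; substituting $w=u-u^m_p$ reduces the problem to a compact map $\widehat T_p$ with $\widehat T_p(0)=0$ and $\widehat T_p{}'(0)=(-\Delta)^{-1}\big(p\,|u^m_p|^{p-1}\,\cdot\,\big)$. Both $C^{1,\gamma}_0(A)$ and each subspace $X^n$ from \eqref{X} are invariant under $\widehat T_p$, the symmetry being preserved by pointwise powers and by $(-\Delta)^{-1}$.

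Because $u^m_p$ is radial, $\widehat T_p{}'(0)$ is diagonalised by separation of variables: it has a kernel element $\psi(r)\,Y_k(\varphi,\theta)$, with $Y_k$ a spherical harmonic of degree $k\ge0$, precisely when $\lambda_k=k(k+\n-2)$ coincides with one of the eigenvalues $\Lambda_1(p)>\Lambda_2(p)>\cdots\to-\infty$ of the weighted Sturm--Liouville problem
\[
  -\big(r^{\n-1}\psi'\big)'+\Lambda\, r^{\n-3}\,\psi \;=\; p\,r^{\n-1}\,|u^m_p|^{p-1}\,\psi\quad\text{on }(a,b),\qquad \psi(a)=\psi(b)=0,
\]
with $\Lambda$ the eigenvalue parameter --- the very object used in \cite{AGG16} to characterise the Morse index of $u^m_p$. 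Each $\Lambda_i$ is simple, and the $p\to+\infty$ asymptotics of \cite{AGG16} (which uses $m\ge2$) give $\Lambda_i(p)\to+\infty$, so that every $\Lambda_i$ crosses, in succession, all of the $\lambda_k$'s. From this sweeping one extracts a sequence $p_n\to+\infty$ at each of which the count $\#\{i:\Lambda_i(p)>\lambda_{k_n}\}$ jumps by exactly one for a single mode $k_n\ge1$, with no other mode count and no radial ($k=0$) count changing --- equivalently, exactly one eigenvalue of $\widehat T_p{}'(0)$ crosses the value $1$ transversally as $p$ passes $p_n$, with eigenspace $\{\psi_n(r)\,Y:Y\ \text{a degree-}k_n\ \text{spherical harmonic}\}$.

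At such a $p_n$, $\dim\ker\big(I-\widehat T_{p_n}{}'(0)\big)$ equals, in $C^{1,\gamma}_0(A)$, the multiplicity $N_{k_n}$ of $\lambda_{k_n}$ for $-\Delta_{S^{\n-1}}$. If $N_{k_n}$ is odd --- always the case when $\n=3$, where $N_k=2k+1$, and frequently otherwise --- the index $\mathrm{ind}(I-\widehat T_p,0)$ changes sign across $p_n$ already in $C^{1,\gamma}_0(A)$. If $N_{k_n}$ is even --- which happens for every mode when $\n=2$, where $N_k=2$ --- I would instead work in the invariant subspace $X^{k_n}$: an explicit enumeration of the degree-$k_n$ spherical harmonics that are $2\pi/k_n$-periodic and even in $\varphi$ shows that they span a subspace of \emph{odd} dimension (reducing to the single harmonic $\cos(k_n\varphi)$ when $\n=2$), while no other mode enters $\ker\big(I-\widehat T_{p_n}{}'(0)\big)\cap X^{k_n}$. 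In either case, in the chosen space $E\in\{C^{1,\gamma}_0(A),X^{k_n}\}$ the number of eigenvalues of $\widehat T_{p_n}{}'(0)$ exceeding $1$ jumps by an odd amount, hence $\mathrm{ind}(I-\widehat T_p,0)$ changes sign at $p_n$. Rabinowitz's theorem then produces a closed connected set $\mathcal C_n\subset\R\times E$ of solutions of \eqref{eq1} branching from $\mathcal S$ at $(p_n,u^m_{p_n})$ that is either unbounded or meets $\mathcal S$ at some $(p_*,u^m_{p_*})$ with $p_*\ne p_n$ --- the Rabinowitz alternative. Finally the bifurcating solutions are genuinely nonradial, since near $(p_n,u^m_{p_n})$ they have the form $u^m_p+s\,\psi_n(r)\,Y(\varphi,\theta)+o(s)$ with $Y$ nonconstant, so $\mathcal C_n$ cannot lie inside the one-parameter family of radial solutions.

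The crux --- and the only genuinely delicate step --- lies in the second and third paragraphs: turning the $p\to+\infty$ analysis of \cite{AGG16} into a sequence of ``clean'' crossing values $p_n\to+\infty$ (a single transversal mode crossing, no accidental simultaneous degeneracy, no radial degeneracy), and then carrying out, dimension by dimension and mode by mode, the exact count of the spherical harmonics compatible with the symmetry of $X^n$, so as to be sure that a judicious choice of the symmetry always makes the kernel dimension odd. Granted this, the compactness of $\widehat T_p$, the global bifurcation, and the nonradiality of the branch are routine.
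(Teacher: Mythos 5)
There is a genuine gap, and it sits exactly where you yourself locate ``the crux.'' You assume that from the sweeping of the Sturm--Liouville eigenvalues across the $\lambda_k$'s one can extract $p_n\to+\infty$ at which \emph{exactly one} mode $k_n$ degenerates transversally, ``with no other mode count changing.'' For $m\ge 2$ this cannot be guaranteed: by the degeneracy characterization \eqref{il}, at the very value $p_n$ where $\nu_1(p)+\lambda_n$ vanishes one may simultaneously have $\nu_i(p_n)+\lambda_j=0$ for several pairs $(i,j)$ with $2\le i\le m$ and $1\le j\le n-1$, and there is no control over whether such coincidences occur. This is precisely why the positive-solution argument of \cite{GGPS11} does not carry over to nodal solutions, and it is the obstacle the paper's proof is built to circumvent: rather than avoiding simultaneous degeneracies, the paper computes the total signed jump of the Morse index as $\sum_j \chi_j N_j(W)$ in \eqref{formula}, allowing all accidental crossings, and then shows by a parity analysis that this sum is odd in at least one of the two spaces $W=C^{1,\gamma}_0(A)$ or $W=X^n$. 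Your proposal replaces this analysis with an unproved cleanness assumption.

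A second, concrete error compounds the first: you claim that the degree-$k_n$ spherical harmonics lying in $X^{k_n}$ always span a subspace of \emph{odd} dimension. This is true only for $\n=2$ (dimension $1$). For $\n=3$ the dimension is $N_n(X^n)=2$, and for $\n\ge4$ it is $1+\binom{\n+n-3}{\n-3}$, which may be even or odd. The actual mechanism in the paper is not ``make the kernel odd-dimensional by symmetry'' but a parity switch: for $j<n$ one has $N_j-N_j(X^n)=2\binom{\n+j-3}{\n-2}$, so the contribution of the lower (accidental) modes has the \emph{same} parity in $C^{1,\gamma}_0(A)$ and in $X^n$, while $N_n$ and $N_n(X^n)$ have \emph{opposite} parities; hence whichever parity the accidental contribution has, the total change is odd in exactly one of the two spaces, and one runs the degree argument there. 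Without both of these ingredients --- handling simultaneous degeneracies and the correct parity bookkeeping of harmonics in $X^n$ --- the proof does not close. The remaining parts of your proposal (compactness of $T$, invariance of $X^n$, the Rabinowitz alternative, nonradiality of the branch) agree with the paper and are fine.
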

The claim holds also for $m=1$, but it has already been proved before. 
The approach used here does not provide further information about the symmetry of the nonradial bifurcating solutions that do not necessarily belong to $X^n$ (apart from $\n=2$) and  does not allow to separate continua one from another. 
This issue can be tackled by turning to the notion of degree and index of fixed points in cones introduced by Dancer in \cite{D83}. This quite technical notion of index has been used by Dancer in \cite{D92} to show separation of global branches of positive solutions in an annulus in 2 dimensions.
Indeed in that case the branches of bifurcating solutions which belong to different cones are separated and, thanks to positiveness and subcriticality, they exist for every value of the bifurcating parameter, giving also a multiplicity result.
Dancer said in \cite{D92} that his result was rather two dimensional and suggested to generalize to the higher dimensional case using the $O(\n-1)$-invariant spherical harmonics which depend only on one angle in spherical coordinates. Indeed this effort was done in \cite{G16} where the author managed to separate the first two branches of bifurcating solutions from the others.
The main difficulty in this last paper was to understand which kind of symmetries were inherited by nonradial solutions, since the symmetries of the $O(\n-1)$-invariant spherical harmonics were unclear. 
Here we continue the study in \cite{G16} in force of the analysis performed in the proof of Theorem \ref{teo:bifurcation}. Indeed in dealing with sign changing solutions a careful study of the spherical harmonics was necessary to understand and overcome the degeneracy. Finally it comes out that the $O(\n-1)$-invariant spherical harmonics were not the good ones to generalize the result of \cite{D92} and the right symmetry and periodicity properties are the one with respect to the last angle $\varphi$ considered in the space $X^n$.
Summing up, the suitable cones are
\begin{equation}\label{K^n}\begin{array}{rl}
K^n:=\{v \in X^n  \, : \, & v(r,\varphi,\theta) \hbox{ is nonincreasing w.r.t. } \varphi \\
& \hbox{ for all } (r,\varphi,\theta)\in (a,b)\times [0,{\pi}/n)\times [0,\pi]^{\n\!-\!2} \}.
\end{array}\end{equation}

Actually  there is a continuum of nonradial solutions  which belong to the cone $K^n$, (for any $n$ sufficiently large or any $n$ when $m=1$) and that it is not compact in $(1,+\infty)\times C^{1,\gamma}_0(A)$.
Let 
$\Sigma_n^m$ be the closure of the set
\[ \{ (p, u) \in (1,+\infty)\times K^n\setminus \mathcal{S}^m  \, : \, u \mbox{ solves \eqref{eq1}} \}. 
\]
We prove that

\begin{theorem}\label{teo:bifurcationincones}
  For all $m\ge 1$ there exists 
$\bar{n}=\bar{n}(m)\in\N$  
and a sequence $\left(p_n, u^m_{p_n}\right)_{n\ge\bar{n}}$ such that  $(p_n,u^m_{p_n})\in \Sigma^m_n$ and $p_n\to+\infty$.
  Moreover letting $C_n$ be the closed connected component of $\Sigma^m_n$ that contains $(p_n,u^m_{p_n})$, these continua are global and the following Rabinowitz alternative holds, namely every $C_n$ contains a sequence $(p_k,u_k)$ such that
\begin{enumerate}[i)]
\item or $p_k\to +\infty$,
\item or $\|u_k\|_{C^{1,\gamma}(A)}\to +\infty$,
\item or, for $m>1$, $p_k\to 1$, $\|u_k\|_{C^{1,\gamma}(A)}<C$ and $u_k$  converges to an eigenfunction of the Laplacian.
\end{enumerate}
\end{theorem}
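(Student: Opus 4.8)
The strategy is to run a bifurcation argument for the fixed point index in the cones $K^n$, paralleling the Leray–Schauder approach of Theorem~\ref{teo:bifurcation} but adapted to the cone structure of Dancer. First I would reformulate \eqref{eq1} as a fixed point equation $u = T_p(u)$ where $T_p(u) := (-\Delta)^{-1}(|u|^{p-1}u)$ is a compact operator on $C^{1,\gamma}_0(A)$, and then verify the crucial invariance property: $T_p$ maps the cone $K^n$ into itself. This is where the particular choice of symmetry in $X^n$ and monotonicity in $\varphi$ in \eqref{K^n} pays off — one uses that the nonlinearity $t\mapsto |t|^{p-1}t$ is increasing and odd, together with a reflection/moving-plane type argument (or an explicit decomposition via the Green function of the annulus) to show that monotonicity in $\varphi$ on $[0,\pi/n)$ is preserved under inversion of the Laplacian. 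The radial solutions $u^m_p$ lie on the boundary (in fact in the "vertex" of the linear span) of $K^n$, so $K^n$ is a natural cone along which to detect nonradial bifurcation.

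Next I would compute the fixed point index $i_{K^n}(T_p, u^m_p)$ for $p$ on either side of a degeneracy value. Here I invoke the detailed spectral analysis carried out in the proof of Theorem~\ref{teo:bifurcation}: the linearization of $T_p$ at $u^m_p$ restricted to $X^n$ becomes degenerate exactly when an eigenvalue of the associated Sturm–Liouville problem (from \cite{AGG16}) crosses a threshold tied to the eigenvalues of the Laplace–Beltrami operator on $S^{\n-1}$ that are compatible with $2\pi/n$-periodicity and evenness in $\varphi$. The key point — the reason $K^n$ works where the $O(\n-1)$-invariant setting of \cite{G16} was delicate — is that within the cone $K^n$ only the first relevant Laplace–Beltrami eigenvalue (the one with a single sign change in $\varphi$, realized by $\cos(n\varphi)$) produces a directional derivative pointing into or out of the cone, so the index jump is governed by a one-dimensional object and is genuinely $\pm 1 \to 0$ or similar, i.e. the change is detected. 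I would then choose $\bar n = \bar n(m)$ large enough that for $n \ge \bar n$ there is at least one such crossing value $p_n$, with $p_n \to +\infty$ as $n \to +\infty$ (this asymptotic location of the degeneracy values is exactly what the $p\to +\infty$ analysis of the Morse index in \cite{AGG16} provides; for $m=1$ the first eigenvalue suffices for all $n$, recovering Dancer's situation).

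With the index jump in hand, the global bifurcation theorem for the fixed point index in cones (Dancer's version in \cite{D83}, see also its use in \cite{D92, G16}) yields a closed connected component $C_n \subset \Sigma^m_n$ containing $(p_n, u^m_{p_n})$ which is non-compact in $(1,+\infty)\times C^{1,\gamma}_0(A)$. The Rabinowitz alternative then forces one of the three escape scenarios: $p_k \to +\infty$, $\|u_k\| \to +\infty$, or $p_k \to 1$. In the last case, when $m>1$, I would show using elliptic estimates and the a priori bound $\|u_k\|< C$ that along a subsequence $u_k$ converges to a nontrivial solution of the limiting linear problem $-\Delta u = u$ with zero boundary data — i.e. to an eigenfunction of the Dirichlet Laplacian on $A$ — since the nonlinearity $|u|^{p-1}u \to u$ uniformly on bounded sets as $p\to 1$ and the limit cannot be the zero of the branch (the component has detached from $\mathcal S^m$). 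The only genuine obstacle is the cone-invariance of $T_p$ together with the computation of the index change: one must be sure that the degeneracy within $K^n$ is captured by a single mode, so that the index actually jumps rather than being killed by a higher-dimensional kernel — this is precisely the technical heart, and it rests on the spherical-harmonic bookkeeping established for Theorem~\ref{teo:bifurcation}, showing that the admissible eigenfunctions that are nonincreasing in $\varphi$ near the "first" crossing span a one-dimensional space.
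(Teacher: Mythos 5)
Your proposal follows essentially the same route as the paper: cone invariance of $T(p,\cdot)$ on $K^n$ via monotonicity of the nonlinearity in $\varphi$, Dancer's fixed point index in cones jumping between $0$ and $\pm 1$ exactly when $\nu_1(p)+\lambda_n$ changes sign (so that only the $\cos(n\varphi)$ mode is seen and the multi-dimensional degeneracies of higher $\nu_i$ are invisible to the cone index), the asymptotics of $\nu_1(p)$ from \cite{AGG16} to locate $p_n\to+\infty$ for $n\ge\bar n(m)$, and the Rabinowitz alternative with the $p\to 1$ case handled by convergence to a Dirichlet eigenfunction as in \cite{Gro09}. The only cosmetic difference is that the paper makes the index dichotomy precise through Dancer's ``property $\alpha$'' (rather than a one-dimensional transversality picture) and closes the boundedness case by a parity count of the sign changes of $\nu_1(p)+\lambda_n$ along a bounded component, but these are the formalizations of exactly the steps you describe.
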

Since we are dealing with sign changing solutions and with problems that are, in many cases, supercritical we are not able to follow the continua and to say something about the interval of parameter {$p$} in which this different nonradial sign changing solutions exist.
Another issue is the separation of continua. In dimension $N=2$, the intersection of two cones $K_n$ and $K_i$ with $n\neq i$ consists in radial solutions. This allows to separate branches  of positive solutions, while a continuum of nodal solutions could in principle link to the curve of radial solutions with a different number of nodal zones. Passing to higher dimension $N\ge 3$, the intersection of two cones includes also  functions which are not radial, but do not depend by the angle $\varphi$.
A numerical study of these solutions will be of help to understand their behaviour.

Item {\it iii)}, i.e.~the fact that nonradial solutions can possibly accumulate near at eigenfunctions of the Laplace operator, is peculiar of nodal solutions ($m>1$). We do not know if case {\it iii)} actually occurs, but we are not able to rule it out because many sign changing nonradial solutions exist for $p$ close to $1$ even if one prescribes the number of nodal domains.
As an example in $\R^{\n}$ we can find at least $\n+1$ solutions with $2$ nodal domains, $\n$ minimizing the functional $F$ in \eqref{F} in the space of functions which are {\em{odd}} with respect to some direction $x_i$ and one radial. Further we cannot exclude the fact that the number of nodal domains increases along the {continuum}.

Coming back to positive solutions, Theorem \ref{teo:bifurcationincones} brings forth a deeper understanding.
Notice that the functions in $K^n$ are not $O(\n-1)$ invariant and therefore these solutions do not coincides with the positive ones produced in \cite{GGPS11}. Moreover the { continua }
do not coincide with the two found in \cite{G16}. In particular in dimension $N\geq 3$ for $n=1$ and $n=2$ we obtain at least two nonradial different continua bifurcating from the same points: the ones in \cite{G16} and the ones given in Theorem \ref{teo:bifurcationincones}.
On the other hand, positive solutions cannot develop nodal zones, and item  {\it iii)}  is excluded since a  uniqueness result holds when $p$ close to $1$.
For the reader convenience we restate the bifurcation result for positive solutions.
\begin{theorem}\label{teo:bifurcationinconespos}
 Let $u_p$ be the positive solution to \eqref{eq1}. For all $n\in\N$ there is a 
{continuum}
of positive nonradial solutions in $K^n$, and a sequence $(p_n,u_{p_n}) \in \Sigma^1_n$.
 Moreover $C_n$, the closed connected component of $\Sigma^1_n$ that contains $(p_n,u_{p_n})$, is unbounded.
\end{theorem}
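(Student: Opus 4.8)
The plan is to recast \eqref{eq1} as a fixed point problem for a compact operator acting on the cone $K^n$, and then to apply Dancer's fixed point index in cones, fed with the degeneracy analysis already carried out for Theorem \ref{teo:bifurcation}. Since $m=1$ here, the relevant branch is that of the positive radial solution $u_p:=u^1_p$, and one works with the operator $T_p:C^{1,\g}_0(A)\to C^{1,\g}_0(A)$, $T_p(u):=(-\D)^{-1}\big((u_+)^p\big)$, whose nonnegative fixed points are exactly the nonnegative solutions of \eqref{eq1}. The first step is to prove that $T_p(K^n)\subset K^n$ for every $p>1$ and every $n\in\N$. Nonnegativity of $T_p(u)$ for $u\in K^n$ is the maximum principle; the $2\pi/n$-periodicity and the evenness in $\varphi$ are preserved because $-\D$ commutes with the rotation $\varphi\mapsto\varphi+2\pi/n$ and with the reflection $\varphi\mapsto-\varphi$, which leave $A$ invariant; the monotonicity of $T_p(u)$ in $\varphi$ on $[0,\pi/n)$ is obtained by a reflection (polarization / moving plane in the angular variable $\varphi$) argument with respect to the sets $\{\varphi=0\}$ and $\{\varphi=\pi/n\}$, using that $u\in K^n$, and hence $(u_+)^p$, are already symmetric and monotone in the same sense. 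This structural lemma, together with the compactness of $T_p$, places us in the setting of Dancer's index in cones.

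Second, I would exploit the spectral information coming from the proof of Theorem \ref{teo:bifurcation}: there is a sequence $p_n\to+\infty$ at which $u_{p_n}$ is degenerate, the kernel of $I-D_uT_{p_n}(u_{p_n})$ restricted to $X^n$ being one dimensional and generated by a function $\psi(r)\,Y(\varphi,\theta)$, where $Y$ is an eigenfunction of the Laplace-Beltrami operator whose periodicity and parity in $\varphi$ match those defining $X^n$, so that this generator lies in the closed linear hull of $K^n$. For $p\ne p_n$ close to $p_n$ the fixed point $u_p$ is isolated in $K^n$, and by Dancer's theory its index $i_{K^n}(T_p,u_p)$ equals $(-1)^{\beta(p)}$, where $\beta(p)$ counts with multiplicity the eigenvalues larger than $1$ of the linearization $D_uT_p(u_p)$ acting on $X^n$. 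Since a single simple eigenvalue crosses the value $1$ as $p$ passes through $p_n$, $\beta(p)$ changes parity and $i_{K^n}(T_p,u_p)$ jumps; by the additivity and excision properties of the index in cones this forces a nonradial fixed point of $T_p$ in $K^n$ near $(p_n,u_{p_n})$, i.e. a point of $\Sigma^1_n$, and $(p_n,u_{p_n})\in\Sigma^1_n$.

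Third, I would invoke the global bifurcation theorem for the index in cones to get the connected component $C_n$ of $\Sigma^1_n$ through $(p_n,u_{p_n})$ and the Rabinowitz alternative: $C_n$ is unbounded in $(1,+\infty)\times C^{1,\g}_0(A)$, or it meets $\cS^1$ at another degeneracy point, or it approaches $p=1$. The last possibility, which is item iii) in Theorem \ref{teo:bifurcationincones}, is ruled out for positive solutions because for $p$ close to $1$ the positive solution of \eqref{eq1} is unique, hence radial, so $C_n$ cannot reach that region without collapsing onto $\cS^1$; and the only radial fixed points of $T_p$ lying in $K^n$ are the points of $\cS^1$ themselves, so that a bounded closed loop based on $\cS^1$ would contradict the one dimensionality of the crossing at each degeneracy point. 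Hence $C_n$ is unbounded, which is the assertion of Theorem \ref{teo:bifurcationinconespos}; the statement for general $m\ge 1$ in Theorem \ref{teo:bifurcationincones} follows along the same lines, but item iii) must be retained, since the uniqueness near $p=1$ is no longer available for nodal solutions.

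The step I expect to be the main obstacle is the cone invariance, namely that the monotonicity in $\varphi$ is preserved by $T_p$: the reflection argument has to be set up in spherical coordinates, where the relevant reflections act only on the last angle $\varphi$, and one must check that the datum $(u_+)^p$ and the Green function of $-\D$ on $A$ interact correctly with these reflections. Once this is in place, the index computation is a fairly standard application of Dancer's machinery combined with the already available Morse index and degeneracy description.
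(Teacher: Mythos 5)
Your overall strategy --- cone invariance of the solution operator plus Dancer's fixed point index in the cone $K^n$, fed with the spectral data $\nu_i(p)+\lambda_j$ --- is the same as the paper's Section \ref{S3}. Two of your steps are fine variants: the paper proves $T(p,\cdot)K^n\subset K^n$ not by polarization but by differentiating the equation in $\varphi$ and applying the maximum principle to $\zeta=\partial_\varphi z$ on $(a,b)\times(0,\pi/n)\times(0,\pi)^{\n-2}$, where $\zeta$ vanishes on the whole boundary (this is why $\varphi$, the only angle commuting with $\Delta$, is the right variable); and your truncation $(u_+)^p$ is a harmless modification, the paper simply keeps $|u|^{p-1}u$ and lets positivity follow from proximity to $u^1_p$.

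The genuine gap is in your index computation. You assert that $i_{K^n}(T_p,u_p)=(-1)^{\beta(p)}$ and that the kernel of $I-T'(p_n,\cdot)$ restricted to $X^n$ is one dimensional, so that $\beta$ changes parity across $p_n$. Both claims fail for $\n\ge 3$. The $\lambda_n$-eigenspace of the Laplace--Beltrami operator inside $X^n$ has dimension $N_n(X^n)=1+\binom{\n+n-3}{\n-3}$, which is $2$ already for $\n=3$; hence at $p_n$ the Morse index in $X^n$ jumps by an even number and no parity argument, and no Leray--Schauder degree argument in $X^n$, detects bifurcation. Moreover $(-1)^{\beta(p)}$ is Dancer's formula only when property $\alpha$ fails; when it holds the index in the cone is $0$, and this dichotomy is precisely what rescues the argument. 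The paper's Lemma \ref{soloi=1} shows that property $\alpha$ holds if and only if $\nu_1(p)+\lambda_n<0$ (the witness being the eigenfunction $\psi_1(r)\cos(n\varphi)\prod_{k}(\sin\theta_k)^n$, which lies in $\overline{W}_{u_p}\setminus S_{u_p}$), so that $i_{K^n}(T_p,u_p)$ drops from $\pm1$ to $0$ exactly where $\nu_1(p)+\lambda_n$ changes sign, \emph{independently of the multiplicity} of the crossing. Without this lemma your second step does not produce a change of index, and the subsequent global alternative and the loop-exclusion argument (which you also base on the alleged one dimensionality) do not go through. The exclusion of the $p\to1$ alternative by uniqueness of the positive solution, and the resulting unboundedness of $C_n$, are in line with the paper once the index jump is correctly established.
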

This paper is organized at follows: in Section \ref{S2} we recall the results about nondegeneracy and Morse index obtained in \cite{AGG16} and deduce Theorem \ref{teo:bifurcation}, while in Section \ref{S3} we prove the global bifurcation stated by Theorems \ref{teo:bifurcationincones} and \ref{teo:bifurcationinconespos}.

\section{A bifurcation result from radial nodal solutions}\label{S2}

The starting point to show that a branch of nonradial solutions bifurcates from the radial one is degeneracy, i.e.~existence of a nontrivial solution of the linearized equation
\begin{equation}\label{eq:lin}
\left\{\begin{array}{ll}
-\Delta w = p |u_p|^{p-1} w & \qquad \text{in } A, \\
w=0 & \qquad \text{ on } \partial A.
\end{array}\right.
\end{equation}
Another point is a change of the Morse index, i.e.~the number (with multiplicity) of negative eigenvalues for the linearized problem, i.e.
\begin{equation}\label{eq:eigenvalue}
\left\{\begin{array}{ll}
-\Delta w = p |u_p|^{p-1} w+\mu w & \qquad \text{in } A, \\
w=0 & \qquad \text{ on } \partial A.
\end{array}\right.
\end{equation}
\cite[Lemma 2.1]{GGPS11}  remarks that problem \eqref{eq:eigenvalue} is in some sense equivalent to the weighted eigenvalue problem 
\begin{equation}\label{eq:eigenvaluew}
\left\{\begin{array}{ll}
-\Delta w = p |u_p|^{p-1} w+ \frac{\tilde\mu}{|x|^2} w & \qquad \text{in } A, \\
w=0 & \qquad \text{ on } \partial A.
\end{array}\right.
\end{equation}
Precisely the Morse index of $u_p$ is the number of negative eigenvalues of \eqref{eq:eigenvaluew}, each counted with its multiplicity.
This remark allows, by projecting \eqref{eq:eigenvaluew} along spherical harmonics, to  relate both items of degeneracy and Morse index  to a Sturm-Liouville problem 
\begin{align}\label{eq:linauto}
\left\{\begin{array}{ll}
-\phi''- \dfrac{\n-1}{r} \phi'= \left(p |u_p|^{p-1} +\dfrac{\nu}{r^2} \right)\phi \qquad a<r<b , \\
\phi(a)=\phi(b)=0, \end{array}\right. 
\end{align}
This item has been exploited in a previous work \cite{AGG16}, extending analogous properties already remarked in \cite{GGPS11} for positive solutions.
We recall here the main results of that paper which are useful to the present purpose.
Henceforth we write $\nu_i(p)$ for the sequence of the eigenvalues for problem \eqref{eq:linauto}, 
\[ \lambda_j=j(\n-2+j) \]
for the $j^{th}$ eigenvalue of the Laplace-Beltrami operator in ${\mathbb S}^{\n-1}$ and
\begin{equation}\label{dim-arm}
 N_j= \frac{(N+2j-2)(N+j-3)!}{(N-2)! j!}
\end{equation}
for its multiplicity.

\begin{proposition}\label{degeneracy}
  For every $p>1$ and $m\ge 1$, the nodal radial solutions $u_p^m$ to  \eqref{eq1} are radially nondegenerate, and degenerate if and only if 
\begin{align}\label{il}
  \nu_i(p)+\lambda_j=0  \quad  \mbox{as $i=1,\cdots m$ and $j\ge 2$, or  $i=m$ and $j= 1$.}
\end{align}
The values of $p$ such that $u^m_p$ is degenerate are isolated and build an increasing diverging sequence.  
Moreover the Morse index of $u^m_p$ is given by the formula
\begin{equation}\label{morseformula}
    m(u^m_p)=\sum_{i=1}^{m}\sum _{j<J_i(p)} N_j,
    \end{equation}
where $J_i(p)=\left(\sqrt{(\n-2)^2-4\nu_i(p)}-\n+2\right)/2$
and
  \begin{align}\label{asymptoticnuinfty}
    \lim\limits_{p\to+\infty}\nu_i(p)=& -\infty \qquad && \mbox{for all } i=1,\dots m, \\
     \label{asymptoticnu1l}
     0\ge\limsup\limits_{p\to 1^+}\nu_i(p)\ge \liminf\limits_{p\to 1^+} \nu_i(p) > & -\infty  \;  && \mbox{for } i=1,\dots m-1, \\
     \label{asymptoticnu1m}
     \lim\limits_{p\to 1^+}\nu_m(p)=& 0.  && 
  \end{align}
 \end{proposition}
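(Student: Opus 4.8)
The plan is to establish Proposition~\ref{degeneracy} by combining the spherical-harmonic decomposition of the eigenvalue problem \eqref{eq:eigenvaluew} with a careful analysis of the one-dimensional Sturm-Liouville problem \eqref{eq:linauto}. First I would record the basic structure: expanding $w$ in \eqref{eq:eigenvaluew} along the spherical harmonics $Y_j$ of $\mathbb S^{\n-1}$ (with eigenvalue $\lambda_j=j(\n-2+j)$ and multiplicity $N_j$), the radial part of the component along $Y_j$ solves \eqref{eq:linauto} with $\nu=\tilde\mu-\lambda_j$. Hence $u_p^m$ is degenerate precisely when $\nu_i(p)=-\lambda_j$ for some $i\ge1$, $j\ge0$, and $\tilde\mu=0$ is attained; the Morse index counts, with multiplicity $N_j$, all pairs $(i,j)$ with $\nu_i(p)<-\lambda_j$, which rearranges into \eqref{morseformula} once one checks that $\nu_i(p)<-\lambda_j \iff j<J_i(p)$ by solving $j(\n-2+j)=-\nu_i(p)$ for $j$. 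The radial nondegeneracy amounts to showing $\nu_i(p)\ne 0$ for all $i$, or more precisely that $\nu=0$ — corresponding to $j=0$, $\lambda_0=0$ — is never an eigenvalue of \eqref{eq:linauto}; I would get this from the fact that $u_p^m$, being a radial solution with simple nodal structure, has a linearization whose radial kernel is trivial (the derivative $\partial_r u_p^m$ would be the natural candidate but it does not satisfy the boundary conditions, and a Sturm-oscillation / uniqueness argument rules out any other radial kernel element).

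Next I would pin down the \emph{sign and ordering} of the $\nu_i(p)$, which drives the precise index set in \eqref{il}. The key point is that $\nu_1(p)$ is the principal eigenvalue of \eqref{eq:linauto} and, because $u_p^m$ changes sign $m-1$ times, one has a chain $\nu_1(p)<\nu_2(p)<\cdots<\nu_m(p)\le 0<\nu_{m+1}(p)$, with the crucial endpoint fact $\nu_m(p)\le 0$ and $\nu_i(p)<0$ strictly for $i<m$: this is exactly what encodes ``Morse index of the radial solution seen radially is $m$'' (the $m$ nodal zones of $u_p^m$ each contribute one negative radial eigenvalue). I would cite or reprove this from \cite{AGG16}. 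Combined with $\lambda_1=\n-1>0$ and $\lambda_j$ increasing, the degeneracy condition $\nu_i(p)+\lambda_j=0$ with $\nu_i\le 0$ forces either $i\le m$ and $\lambda_j=-\nu_i>0$, i.e.\ $j\ge1$ — and when $i<m$ the value $-\nu_i$ is genuinely positive so it can only match $\lambda_j$ with $j\ge1$; the borderline $i=m$, where $\nu_m$ could be $0$, would correspond to $j=0$ but that is radial and excluded by radial nondegeneracy, so one needs $j\ge1$ there too — while for $i=m$ the case $j\ge 2$ versus $j=1$ is kept separate in the statement because $\nu_m(p)\to 0$ means $\lambda_1$ is the ``first to be hit''. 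This gives exactly \eqref{il}.

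Then I would prove the asymptotics \eqref{asymptoticnuinfty}--\eqref{asymptoticnu1m}. For $p\to+\infty$: the weight $p|u_p^m|^{p-1}$ concentrates (the solutions blow up / concentrate as $p\to\infty$ in the Lane-Emden regime), so the Rayleigh quotient $\int(\phi'^2)/\int(\phi^2/r^2)$ minus $p\int|u_p|^{p-1}\phi^2 / \int(\phi^2/r^2)$ can be driven arbitrarily negative by a suitable test function supported near a concentration point, giving $\nu_i(p)\to-\infty$ for each fixed $i$; here I would lean on the asymptotic description of $u_p^m$ from \cite{AGG16}. For $p\to1^+$: the nonlinearity becomes linear, $|u_p|^{p-1}\to1$, and \eqref{eq:linauto} degenerates to a fixed Sturm-Liouville problem $-\phi''-\frac{\n-1}{r}\phi'=(\text{const}+\nu/r^2)\phi$; a continuity/compactness argument for the eigenvalues shows $\nu_i(p)$ converges, the limit is $\le0$ because $\nu_i(p)\le0$ (resp.\ $<0$) along the sequence, it is bounded below by the limit problem, and for $i=m$ one gets exactly $0$ because $\nu_m(p)$, being the last nonpositive radial eigenvalue, must approach the threshold — here I would use that at $p=1$ the relevant limit operator has $0$ as an eigenvalue precisely in the $\nu$-slot indexed by $m$ (this is where the normalization ``positive in the first nodal zone'' and the exact count of radial eigenvalues enter). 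Finally, isolatedness and the diverging monotone character of the degeneracy set follow since each $\nu_i(p)$ is continuous and, by \eqref{asymptoticnuinfty}, monotone-like decreasing to $-\infty$, so it crosses each level $-\lambda_j$ only finitely often on compact $p$-intervals and the full set of crossings is an increasing sequence diverging to $+\infty$.

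The main obstacle I expect is the $p\to1^+$ analysis of $\nu_m(p)$: proving the limit is exactly $0$ (not merely $\le0$) requires knowing precisely that $u_p^m$ contributes exactly $m$ negative radial eigenvalues with the $m$-th one touching zero in the limit, which in turn relies on the fine asymptotic/spectral description of the radial nodal solutions near $p=1$; and relatedly, controlling the concentration behaviour as $p\to+\infty$ carefully enough to conclude $\nu_i(p)\to-\infty$ for \emph{every} fixed $i$ (uniformly in a way that survives passing to the $i$-th eigenvalue) rather than just for $\nu_1$. Both of these are essentially imported from \cite{AGG16}, so in this paper the work is mostly in assembling those inputs correctly and doing the bookkeeping that turns the crossing condition into \eqref{il} and the counting into \eqref{morseformula}.
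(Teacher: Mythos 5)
You should first be aware that the paper does not prove this Proposition at all: it is explicitly \emph{recalled} from \cite{AGG16} (``We recall here the main results of that paper\dots''), so there is no in-paper proof to match. Your sketch does reconstruct the correct skeleton — separation of variables along spherical harmonics reducing \eqref{eq:eigenvaluew} to the Sturm--Liouville problem \eqref{eq:linauto}, the identification of the eigenvalues as $\nu_i(p)+\lambda_j$, the counting $\nu_i(p)+\lambda_j<0\iff j<J_i(p)$ that yields \eqref{morseformula}, and the attribution of the asymptotics \eqref{asymptoticnuinfty}--\eqref{asymptoticnu1m} to the fine analysis of $u^m_p$ as $p\to1^+$ and $p\to+\infty$.

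There is, however, a genuine gap in your derivation of \eqref{il}. The content of \eqref{il} is not only that $j\ge 1$ is forced, but that the pairs $(i,1)$ with $i<m$ \emph{never} produce degeneracy, i.e.\ $\nu_i(p)\neq-\lambda_1=-(\n-1)$ for all $p>1$ and all $i\le m-1$. Your chain $\nu_1(p)<\dots<\nu_m(p)\le 0$ only gives $\nu_i(p)<0$, which is compatible with $\nu_i(p)$ crossing the level $-(\n-1)$ at some $p$ (indeed $\nu_i(p)\to-\infty$ as $p\to+\infty$ while $\limsup_{p\to1^+}\nu_i(p)\le 0$, so without further input such a crossing is exactly what one would expect). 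The remark that ``$\lambda_1$ is the first to be hit'' is about $i=m$ and does not address this. What is actually needed, and what \cite{AGG16} proves, is the uniform strict bound $\nu_i(p)<-(\n-1)$ for every $i=1,\dots,m-1$ and every $p>1$; this follows from a Sturm comparison using $\partial_r u^m_p$, which solves the equation in \eqref{eq:linauto} with $\nu=-(\n-1)$ (without the boundary conditions) and has at least $m$ zeros in $(a,b)$, forcing the $(m-1)$-th Dirichlet eigenvalue of that problem to be negative. A second, smaller gap: isolatedness of the degeneracy values of $p$ does not follow from continuity of $p\mapsto\nu_i(p)$ plus the asymptotics, since a continuous eigenvalue branch could sit on the level $-\lambda_j$ on a whole interval or accumulate crossings; one needs the (local) analyticity of $\nu_i(\cdot)$ established in \cite[Lemma 3.4]{AGG16}. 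Both points are precisely the nontrivial inputs hidden in the citation, so your proposal as written does not close them.
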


Even though the information collected in Proposition \ref{degeneracy}  imply that there are points where $u_p$ is degenerate and the Morse index actually changes, there still are nontrivial difficulties in deducing global bifurcation because there is no evidence that such change is odd.
Such difficulties  do not present when dealing with positive solutions, because in that case it is clear by \eqref{il}  that only the first eigenvalue $\nu_1(p)$ can produce degeneracy. So one can 
remark that the related eigenfunctions of the Laplace-Beltrami operator which are $O(\n-1)$-invariant span a  one dimensional space getting a global bifurcation result. 
Unfortunately it does not apply to nodal solutions, because it can happen that $\nu_i(p)+\lambda_j=0$ for many values of $i$ and $j$, at the same $p$. Therefore it is needed to take into account the eigenfunctions of the Laplace-Beltrami operator related to many different eigenvalues, and in general this could produce an even change in the Morse index.
In the following we show that the change of the Morse index is actually odd in a suitable linear space, which can be the entire $C^{1,\gamma}_0(A)$ or $X^n$ introduced in \eqref{X} (for some properly chosen $n$) and this gives the global bifurcation result.
To enter the details it is of help introducing the operator
\begin{align}
  T(p,v) : & \, (1,+\infty)\times  C^{1,\gamma}_0(A)\lra  C^{1,\gamma}_0(A) , \nonumber  \\ \label{T}
 & T(p,v)=(-\Delta)^{-1}\left(|v|^{p-1}v\right).
\end{align}
It is clear that $(p,u_p)$ is a curve of fixed points for $T$ and more generally $u$ solves \eqref{eq1} when $u=T(p,u)$.
Coming to the linear space $X^n$ defined in \ref{X}, we have that $u_p \in X^n$ for all $n$ and $T(p,\cdot)$ maps  the space $X^n$ into itself.

\begin{lemma}\label{lemmaTX}
  Let $p>1$, $n\in\N$ and $v\in X^n$. Then $T(p,v)\in X^n$.
 \end{lemma}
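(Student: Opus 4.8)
The plan is to verify directly that the two defining properties of $X^n$ — namely $2\pi/n$-periodicity in $\varphi$ and evenness in $\varphi$ — are preserved by the operator $T(p,\cdot)$. Since $T(p,v) = (-\Delta)^{-1}\big(|v|^{p-1}v\big)$, and the only nonlinear operation is the pointwise map $v\mapsto|v|^{p-1}v$, the argument splits into two observations: first, that $v\mapsto|v|^{p-1}v$ sends $X^n$ into the same symmetry class (this is immediate, because the map is applied pointwise, so it commutes with any reparametrization of the domain, in particular with the rotation $\varphi\mapsto\varphi+2\pi/n$ and the reflection $\varphi\mapsto-\varphi$); and second, that $(-\Delta)^{-1}$ preserves these symmetries. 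The latter is the substantive point, and it follows from the fact that both the rotation $R_{2\pi/n}$ about the $x_1x_2$-axis and the reflection $S$ across the hyperplane $\{x_2=0\}$ are orthogonal transformations of $\R^{\n}$ that map the annulus $A$ onto itself.

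In more detail, let $g\in O(\n)$ be any orthogonal map with $g(A)=A$. If $f$ solves $-\Delta f = h$ in $A$ with $f=0$ on $\pa A$, then $f\circ g$ solves $-\Delta (f\circ g) = h\circ g$ in $A$ with $f\circ g = 0$ on $\pa A$, because the Laplacian commutes with orthogonal changes of variables and $g$ preserves $A$ and $\pa A$. By uniqueness of the solution to the Dirichlet problem, this says $(-\Delta)^{-1}(h\circ g) = \big((-\Delta)^{-1}h\big)\circ g$. Now take $h=|v|^{p-1}v$. If $v$ is $2\pi/n$-periodic in $\varphi$, i.e.\ $v\circ R_{2\pi/n}=v$, then $h\circ R_{2\pi/n}=h$, hence $T(p,v)\circ R_{2\pi/n}= (-\Delta)^{-1}(h\circ R_{2\pi/n}) = (-\Delta)^{-1}h = T(p,v)$, so $T(p,v)$ is again $2\pi/n$-periodic in $\varphi$. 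Likewise, if $v$ is even in $\varphi$, i.e.\ $v\circ S = v$, the same computation with $g=S$ gives $T(p,v)\circ S = T(p,v)$, so $T(p,v)$ is even in $\varphi$. Together with the standard elliptic regularity that places $T(p,v)\in C^{1,\gamma}_0(A)$ whenever $v\in C^{1,\gamma}_0(A)$ (which is how $T$ was set up in \eqref{T}), this shows $T(p,v)\in X^n$.

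The only mild subtlety — and the step I would spell out carefully — is translating the coordinate descriptions ``$\varphi\mapsto\varphi+2\pi/n$'' and ``$\varphi\mapsto-\varphi$'' into honest orthogonal matrices using the spherical coordinate formulas \eqref{sphericalcoord}. Inspecting those formulas, only $x_1$ and $x_2$ involve $\varphi$, and they do so through the factor $(\cos\varphi,\sin\varphi)$ scaled by the common positive quantity $r\prod_{k=1}^{\n-2}\sin\theta_k$; the remaining coordinates $x_3,\dots,x_{\n}$ are independent of $\varphi$. Hence the shift $\varphi\mapsto\varphi+2\pi/n$ is realized by the block-diagonal matrix acting as the planar rotation by $2\pi/n$ on $(x_1,x_2)$ and as the identity on $(x_3,\dots,x_{\n})$, and $\varphi\mapsto-\varphi$ is realized by $\mathrm{diag}(1,-1,1,\dots,1)$. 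Both are in $O(\n)$ and commute with $|x|$, hence preserve $A$; once this identification is in place, the argument above applies verbatim. I do not expect any real obstacle here; the lemma is essentially the statement that $(-\Delta)^{-1}$ is equivariant under the relevant finite group of symmetries of the annulus, and the proof is a short uniqueness argument.
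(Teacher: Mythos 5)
Your proof is correct and follows essentially the same route as the paper's: transport the solution by the symmetry, observe that it solves the same Dirichlet problem because the nonlinearity is pointwise and the Laplacian commutes with the transformation, and conclude by uniqueness. If anything you are slightly more complete, since you realize the maps $\varphi\mapsto\varphi+2\pi/n$ and $\varphi\mapsto-\varphi$ as explicit orthogonal transformations of $\R^{\n}$ preserving $A$ and you treat the evenness in $\varphi$ explicitly, whereas the paper's written proof only spells out the periodicity.
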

\begin{proof}
Let $v\in X^n$ and let $z:=T(p,v)$. Then $z$ is a weak solution to $-\Delta z=|v|^{p-1}v$ in $A$ with Dirichlet boundary conditions. Letting $\tilde z:=z(r,\varphi+\frac{2\pi}n,\theta)$ it is easy to see that $\tilde z$ weakly solves $-\Delta \tilde z=|v(r,\varphi+\frac{2\pi}n,\theta)|^{p-1}v(r,\varphi+\frac {2\pi}n,\theta)$ in $A$ with Dirichlet boundary conditions, and since $v\in X^n$ then $-\Delta \tilde z=|v(r,\varphi,\theta)|^{p-1}v(r,\varphi,\theta)=|v|^{p-1}v$ in $A$. Then $\tilde z$ and $z$ satisfy the same equation in $A$ and then $\tilde z=z$. This implies that $z\in X^n$ and concludes the proof.
\end{proof}

We are now in the position to prove Theorem \ref{teo:bifurcation}.

\begin{proof}[Proof of Theorem \ref{teo:bifurcation}]
 In force of \eqref{asymptoticnuinfty} and \eqref{asymptoticnu1l}, for any $m\geq 2$ there exists $\bar n(m)$ such that the quantity $\nu_1(p) + \lambda_n $ changes sign for $p\in (1,+\infty)$ and $n\geq \bar n$. In particular 
for all sufficiently large $n\in\N$ there exist at least one $p_n \in (1,+\infty)$ such that
\begin{align}\label{degeneracypoint}
  \nu_1(p_n) + \lambda_n =0 , \qquad  \left(\nu_1(p_n-\delta) +\lambda_n \right) \left(\nu_1(p_n+\delta) +\lambda_n \right) < 0
\end{align}
for suitable $\delta>0$.
Since the degeneracy points are isolated by Lemma \ref{lemmaTX} we can also take that 
\begin{equation}\label{i}
  \nu_i(p) \neq -\lambda_j \; \mbox{ for all $i , j \in \N$, \, if $p\neq p_n$}
\end{equation}
for $p\in[p_n -\delta, p_n + \delta]$.

  Assume by contradiction that $(p_n, u_{p_n})$ is not a bifurcation point in $W$ (for $W=C^{1,\gamma}_0(A)$ or $X^n$). Then there exists $\delta >0$ so that properties \eqref{degeneracypoint}, \eqref{i} hold, and neighborhoods ${\mathcal O}_p$ of $u_{p}$ in $W$ such that $u-T(p,u)\neq 0$ for every $(p,u)\neq (p,u_p)$ in the boundary of the set $\left\{ (q,v) \, : \, q\in (p_n-\delta, p_n+\delta) , \, v\in {\mathcal O}_q\right\}$.
Hence
\[ {\mathrm{deg}}_W(I-T(p_n-\d,\cdot),{\mathcal O}_{p_n-\d},0)={\mathrm{deg}}_W(I-T(p_n+\d,\cdot),{\mathcal O}_{p_n+\d},0).\]
But by \eqref{i} we know that ${\mathrm{deg}}_W(I-T(p_n\pm\d,\cdot),{\mathcal O}_{p_n\pm\d},0)=(-1)^{m_W(p_n\pm \d)}$,
so that $m_W(p_n +\d) - m_W(p_n -\d)$ must be an even number.
Here $m_W(p)$ stands for the Morse index of  $u_p$ in $W$, i.e. the number of negative eigenvalues of \eqref{eq:eigenvalue}, each multiplied by the number of respective eigenfunctions contained in $W$.
On the other hand $m_W(p)$ is equal to the number of linearly independent eigenfunctions of \eqref{eq:eigenvaluew} in $W$ corresponding to negative eigenvalues.
It is easily seen  that the eigenvalues of \eqref{eq:eigenvaluew} are $\tilde\mu = \nu_i(p)+\lambda_j$ and the related eigenfunctions  are of type 
\begin{equation}\label{eigenfunction_formula}
  w_{i j}(r,\varphi,\theta) = \phi_i(r) \, Y_j(\varphi,\theta),
  \end{equation}
where $\phi_i$ is an eigenfunction of \eqref{eq:linauto} related to $\nu_i(p)$ and $Y_j$ is an eigenfunction
of the Laplace-Beltrami operator on ${\mathbb S}^{\n\!-\!1}$ related to the eigenvalue $\lambda_j$. \\
Summing up the eigenfunctions of type $w_{1,n}$ cause  a change in the Morse index from $p_n-\delta$ to $p_n+\delta$,
  but there can be an additional change due to other eigenfunctions of type $w_{i j}$ because the degeneration in $p_n$ can have a quite involved structure.
  Remembering \eqref{i} and the fact that $\nu_1(p)<\nu_i(p)$, the other indices $i,j$ that come into play are that ones satisfying
  \begin{align}\label{ii}
   \begin{array}{c} \nu_i(p_n ) + \lambda_j=0 \mbox{ for some $i\in\{2,\dots,m\}$ and } j\in\{1,\dots  n -1\}, \\
     (\nu_i(p_n-\delta ) + \lambda_j) (\nu_i(p_n+\delta ) + \lambda_j) <0 . \end{array}
  \end{align}
  More precisely the Morse index increases if $\nu_i(p_n-\delta)+\lambda_j>0$ and it decreases in the opposite case.
  The amount of such change (in absolute value) is equal to $N_j(W)$, that is the multiplicity of $\lambda_j$ as an eigenfunction of the Laplace-Beltrami operator in the linear space $W$.
  We can sum up these comments in the formula
  \begin{equation}\label{formula}
  m_W(p_n +\d) - m_W(p_n -\d)  =  \sum\limits_{j=0}^{n} \chi_j N_j(W) ,
  \end{equation}
where  $\chi_n= {\mathrm{sign}}\left(\nu_1(p_n-\delta)+\lambda_n\right)$ and 
  \[ \chi_j = \begin{cases} {\mathrm{sign}}\left(\nu_i(p_n-\delta)+\lambda_j\right)  & \mbox{ if there is $i=2\dots m$ satisfying \eqref{ii}} \\
      0 & \mbox{ otherwise} \end{cases}\]
  as $j=1\dots n-1$. 

  Eventually if $(p_n, u_{p_n})$ is not a bifurcation point in $W$ then $\sum\limits_{j=0}^{n} \chi_j N_j(W)$ is even. 
    Otherwise, i.e.~when $\sum\limits_{j=0}^{n}  \chi_j  N_j(W)$   is odd,
    we have a contradiction and $(p_n,u_{p_n})$ is  a bifurcation point. Moreover following \cite[Theorem 3.3]{G} one sees that the bifurcation is global and the Rabinowitz alternative holds.

\subsection*{Case $\n=2$}
In this case the eigenfunctions of the Laplace-Beltrami operator in ${\mathbb S}^1$ related to the eigenvalue $\lambda_j=j^2$ are of the form
\[ Y_j(\varphi)=A \cos(j\varphi)+ B \sin(j\varphi), \]
and they build a space of dimension 2 in $C^{1,\gamma}_0(A)$ for any $j\geq 1$.
But $Y_j\in X^n$ only if $B=0$ and $j$ is a multiple of $n$, hence $N_n(X^n)=1$ and $N_j(X^n)=0$ for $j<n$, so that \eqref{formula} gives 
\[ m_{X^n}(p_n +\d) - m_{X^n}(p_n -\d)  =  \chi_n = \pm 1 \]
and we have reached a contradiction. Therefore bifurcation actually occurs in the space $X^n$ and it is global.

\subsection*{Case $\n=3$}
Now the eigenfunctions of the Laplace-Beltrami operator in ${\mathbb S}^2$ related to the eigenfunction $\lambda_j=j(1+j)$ are of type
\begin{equation}\label{N=3}
Y_j(\varphi,\theta)=\sum\limits_{\ell=0}^j  P_j^{\ell}(\cos\theta)\left(A_{\ell} \cos \ell\varphi+ B_{\ell}\sin \ell\varphi\right), 
\end{equation}
where $P_j^{\ell}$ are the associated Legendre polynomials.
The eigenfunctions build a subspace of $C^{1,\gamma}_0(A)$ of (odd) dimension $N_j=1+2j$ (see \eqref{dim-arm}).
If there is not any index $i=2,\dots m$ satisfying \eqref{ii}, then \eqref{formula} gives
\[ 
  m(p_n +\d) - m(p_n -\d)  = \chi_n N_n = \pm (1+2n) ,
  \]
which is odd.
The same conclusion holds true if  
\[\sum\limits_{j=1}^{n-1} \chi_j N_j= \sum\limits_{j=1}^{n-1} \chi_j (1+2j)\]
is even. In these cases bifurcation occurs in $C^{1,\gamma}_0(A)$ and no further information about the symmetries of the nonradial bifurcating solutions are provided.

Otherwise we turn to $X^n$.
Now for $j<n$ $Y_{j}(\varphi,\theta)\in X^n$ only if $B_l=0$ for $l=0,1,\dots  j $ and $A_l=0$ for $l=1,\dots  j $ by \eqref{N=3} (i.e.~there is only 1 linearly independent eigenfunction, corresponding to $A_0$). So the contribution to the Morse index of such indices is given by
\[\sum\limits_{j=1}^{n-1} \chi_j N_j(X^n)= \sum\limits_{j=1}^{n-1} \chi_j, \]
which has the same parity of $ \sum\limits_{j=1}^{n-1} \chi_j (1+2j) $ and is therefore odd, in this case.
Besides $N_n(X^n)=2$ because $Y_{n}(\varphi,\theta)\in X^n$ only if $B_l=0$ for $l=0,1,\dots  n $ and $A_l=0$ for $l=1,\dots  n -1$ by \eqref{N=3} (i.e.~there are 2 linearly independent eigenfunctions, corresponding to $A_0$ and $A_n$).
Therefore by \eqref{formula} the overall change of the Morse index in $X^n$ is 
\[m_{X^n}(p_n +\d) - m_{X^n}(p_n -\d)  =\sum\limits_{j=1}^{n} \chi_j N_j(X^n)= 2\chi_n+\sum\limits_{j=1}^{n-1} \chi_j, \]
that is an odd number, and bifurcation occurs in $X^n$.
\subsection*{General case $\n\in\N$, $\n\ge 4$}
Now the eigenfunctions of the Laplace-Beltrami operator in ${\mathbb S}^{\n-1}$ related to the eigenvalue $\lambda_j=j(\n-2+j)$ are of type
\[Y_j(\varphi,\theta)=\mathop{\sum\limits_{\ell=0}^{j} \prod\limits_{k=1}^{\n\!-\!2}}\limits_{\substack{i_0\le i_1\dots\le i_{\n\!-\!2}\\ i_0=\ell, \; i_{\n\!-\!2} =j}} G_{i_{k}}^{i_{k\!-\!1}}(\cos\theta_k,k\!-\!1) \left(A_{\ell}^{i_1\dots i_{\n\!-\!3}} \cos \ell\varphi+ B_{\ell}^{i_1\dots i_{\n\!-\!3}}\sin \ell\varphi\right), \]
where $G_{i}^{0}(\cdot,k)$ are the Gegenbauer polynomials and  are generated by a Taylor expansion
\begin{align*}
\sum\limits_{i=0}^{\infty}G_{i}^0(\omega,k) x^i = (1-2x\omega+x^2)^{-(1+k)/2} 
\end{align*}
and
\begin{align*}
 G_{i}^{\ell}(\omega,k)  & =  (1-\omega^2)^{\frac{\ell}{2}} \frac{d^{\ell}}{d\omega^{\ell}}G_{i}^0(\omega,k) .
\end{align*}
Moreover  $G_{i}^{\ell}(\omega,0) =  P_{i}^{\ell}(\omega) $ where as before 
$P_j^{\ell}$ are the associated Legendre polynomials.
We refer to the note \cite{W} for more details about the representation of spherical harmonics functions by means of the Gegenbauer polynomials.

The eigenspace has dimension $N_j=\binom{\n+j-1}{\n-1}-\binom{\n+j-3}{\n-1}$ in $C^{1,\gamma}_0(A)$.

If  $j< n $, only the eigenfunctions not depending by $\varphi$ belong to $X^n$, which gives $N_j(X^n)=\binom{\n+j-3}{\n-3}$ linearly independent eigenfunctions (corresponding to  $A_0^{i_1\dots i_{\n\!-\!3}}$, where $i_1\dots i_{\n\!-\!3}$ is a ($\n-3$)-combination of  integers among $0,\dots j$, with possible repetitions).
We claim that if the dimension  of the eigenspace in $C^{1,\gamma}_0(A)$ is even/odd, the same holds in $X^n$, equivalently $N_j-N_j(X^n)$ is even.
But this is clear because $N_j-N_j(X^n)=  2\binom{\n+j-3}{\n-2}$.
Hence the eventual contribution to the change of the Morse index coming from the eigenvalues satisfying \eqref{ii}, that is $\sum\limits_{j=0}^{n-1}\chi_j N_j(W)$, is simultaneously even or odd in both spaces $W=C^{1,\gamma}(A)$ and $W=X^n$. 
Besides for $j= n $ there are $N_n(X^n)=1+\binom{\n+ n -3}{\n-3}$ linearly independent eigenfunctions in ${X}(n)$, because  the only eigenfunction containing $\cos  n \varphi$ adds to the ones considered for $j< n $. It follows that $N_n(X^n)$ is even when $N_n$ is odd, and viceversa.
Summing up, the overall change of the Morse index given by \eqref{formula} is odd in the whole space $C^{1,\gamma}_0(A)$ or in the subspace  $X^n$ and the proof is completed.
\end{proof}

\section{Solutions inside the cones $K^n$}\label{S3}

Theorem \ref{teo:bifurcation} shows that there is an infinite number of  parameters $p_n$ such that $(p_n, u_{p_n})$ is a bifurcating point, i.e.~in any neighborhood of $(p_n,u_{p_n})$ there are points $(p,u)$ where $u$ is not radial and $T(p,u)=u$.
This result is derived by the theory of Leray-Shauder topological degree. 
With the aim of studying continuum of propagating nonradial solutions, we give an alternative proof of Theorem \ref{teo:bifurcation} which makes use of the notion of index of fixed points in cones introduced by Dancer in \cite{D83}.
In any dimension $\n$ we look into cones of type
\[\begin{array}{rl}
K^n:=\{v \in X^n  \, : \, & v(r,\varphi,\theta) \hbox{ is nonincreasing w.r.t. } \varphi \\
& \hbox{ for all } (r,\varphi,\theta)\in (a,b)\times [0,{\pi}/n)\times [0,\pi]^{\n\!-\!2} \},
\end{array}\]
which were already introduced in \eqref{K^n}.

It is clear that $u_p\in K^n$ for any $p$ and $n$ since it does not depend on $\varphi$. Let us 
 check that $T(p,\cdot)$ maps $K^n$ into itself.

\begin{lemma} 
If $g\in K^n$, then $z=T(p,g)\in K^n$.
\end{lemma}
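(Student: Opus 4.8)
The plan is to show that the monotonicity property defining $K^n$ is preserved by the operator $T(p,\cdot)$, building on Lemma \ref{lemmaTX} which already guarantees $z=T(p,g)\in X^n$. So the only thing left to check is that $z(r,\varphi,\theta)$ is nonincreasing with respect to $\varphi$ on the fundamental sector $\varphi\in[0,\pi/n)$. The natural tool is the moving plane / reflection method applied in the angular variable, exploiting that the nonlinearity $f(g):=|g|^{p-1}g$ inherits the same monotonicity as $g$ because $t\mapsto |t|^{p-1}t$ is increasing.

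First I would set up the angular reflection. Since $g\in X^n$, the function $g$ is $2\pi/n$-periodic and even with respect to $\varphi$, hence it suffices to work on the sector $S=\{(r,\varphi,\theta): a<r<b,\ 0\le\varphi\le\pi/n\}$, where $g$ is symmetric about $\varphi=0$ and, by hypothesis, nonincreasing in $\varphi$. Fix $\lambda\in(0,\pi/n)$ and consider the reflection $\varphi\mapsto 2\lambda-\varphi$ across the hyperplane $\{\varphi=\lambda\}$; let $z_\lambda(r,\varphi,\theta):=z(r,2\lambda-\varphi,\theta)$ and $w_\lambda:=z_\lambda-z$ on the sub-sector $\Sigma_\lambda:=\{(r,\varphi,\theta)\in S:\lambda<\varphi<\pi/n\}$ (where $2\lambda-\varphi$ stays inside the region of definition after using periodicity/evenness). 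The monotonicity of $g$ gives $g(r,2\lambda-\varphi,\theta)\ge g(r,\varphi,\theta)$ for $\varphi>\lambda$ in that sub-sector, hence $f(g_\lambda)\ge f(g)$ there; since both $z$ and $z_\lambda$ solve $-\Delta\,\cdot\,=$ (source) with the Laplacian invariant under the reflection, $w_\lambda$ satisfies $-\Delta w_\lambda=f(g_\lambda)-f(g)\ge 0$ in $\Sigma_\lambda$. On the part of $\partial\Sigma_\lambda$ lying in $\{\varphi=\lambda\}$ we have $w_\lambda=0$; on the spherical caps $r=a,b$ we have $w_\lambda=0$ by the Dirichlet condition; on the remaining face (coming from $\varphi=\pi/n$, which by periodicity and evenness is a symmetry hyperplane of $g$ and hence of $z$) one checks $w_\lambda\ge 0$ as well — this last boundary piece is where the structure of $X^n$ is used, namely that $\pi/n$ is also an axis of even symmetry so the reflected point re-enters $S$ with a value $\ge$ the original. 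The maximum principle then yields $w_\lambda\ge 0$ on $\Sigma_\lambda$, i.e. $z(r,2\lambda-\varphi,\theta)\ge z(r,\varphi,\theta)$ for all $\lambda<\varphi<\pi/n$; letting $\varphi\to\lambda^+$ after differentiating, or simply varying $\lambda$, gives $\partial_\varphi z\le 0$ on $(0,\pi/n)$, which is exactly $z\in K^n$.

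I expect the main obstacle to be the careful bookkeeping of the boundary of the reflected sub-sector $\Sigma_\lambda$: one must verify that after the reflection $\varphi\mapsto 2\lambda-\varphi$ and the use of the $2\pi/n$-periodicity and evenness of $g$ (and of $z$, by Lemma \ref{lemmaTX}), the comparison $f(g_\lambda)\ge f(g)$ really holds throughout $\Sigma_\lambda$ and the boundary data for $w_\lambda$ are nonnegative on every face — in particular on the face inherited from $\varphi=\pi/n$. A clean way around the angular-geometry subtleties is to transfer the question to the Euclidean picture: the set $X^n$ consists of functions invariant under the dihedral group generated by the rotations of angle $2\pi/n$ about the $x_1x_2$-axis together with the reflection $x_2\mapsto -x_2$, and $K^n$ asks for monotonicity as one rotates from the half-plane $\{x_2=0,x_1>0\}$ toward $\{x_2=0,x_1<0\}$ inside the fundamental wedge; then the reflection hyperplanes are genuine hyperplanes of $\R^{\n}$, the Laplacian is manifestly invariant, and the maximum principle argument above is literally the moving-plane method for the half-wedge, with the wedge's two bounding hyperplanes being symmetry hyperplanes of $g$. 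Once the geometry is phrased this way the proof is a routine application of the weak maximum principle on the bounded Lipschitz domain $\Sigma_\lambda$ together with the elementary monotonicity of $t\mapsto|t|^{p-1}t$, and the conclusion $z\in K^n$ follows.
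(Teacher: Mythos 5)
Your strategy differs from the paper's: the paper does not move planes at all, but simply differentiates, observing that $\zeta:=\partial_\varphi z$ satisfies $-\Delta\zeta=p|g|^{p-1}\partial_\varphi g\le 0$ on the fundamental sector $A_n=(a,b)\times(0,\pi/n)\times(0,\pi)^{\n-2}$ with $\zeta=0$ on all of $\partial A_n$ (evenness at $\varphi=0$, evenness plus $2\pi/n$-periodicity at $\varphi=\pi/n$, the Dirichlet condition at $r=a,b$, and independence of $\varphi$ on the axis $x_1=x_2=0$ for the $\theta$-faces), so the maximum principle gives $\zeta\le 0$ directly. This exploits that $\varphi$ is the one spherical coordinate commuting with $\Delta$ (Remark \ref{phibuono}) and avoids reflections entirely.

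As written, your moving-plane version has a genuine gap precisely at the step you flag as ``one checks $w_\lambda\ge 0$'' on the face $\varphi=\pi/n$ of $\Sigma_\lambda=\{\lambda<\varphi<\pi/n\}$. There $w_\lambda=z(r,2\lambda-\pi/n,\theta)-z(r,\pi/n,\theta)$, and since $|2\lambda-\pi/n|<\pi/n$ this inequality asserts that $z$ at an interior angle of the fundamental sector dominates $z$ at the endpoint $\pi/n$ --- which is exactly the monotonicity you are trying to prove. Even symmetry of $z$ about $\varphi=\pi/n$ only gives $z(\pi/n+t)=z(\pi/n-t)$ and yields no inequality; the Euclidean ``half-wedge'' reformulation does not remove the problem, because the reflection of $\Sigma_\lambda$ across $\{\varphi=\lambda\}$ is not carried back into $\Sigma_\lambda$ by the symmetries of $g$. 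The argument can be repaired by taking the comparison domain to be a full half-period, $D_\lambda=\{\lambda<\varphi<\lambda+\pi/n\}$: then the far face is identified (via $2\pi/n$-periodicity) with the reflection of the near face, so $w_\lambda=0$ on both angular faces, and the source comparison $g(2\lambda-\varphi,\cdot)\ge g(\varphi,\cdot)$ still holds throughout $D_\lambda$ because $g$ is symmetric decreasing about $\varphi=0$ on the circle of circumference $2\pi/n$. With that modification (or by switching to the paper's derivative argument) the proof closes.
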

  \begin{proof}
    $z=T(p,g)$ solves
    \[ \begin{cases}
-\Delta z=|g|^{p-1}g & \hbox{ in }A \\
z=0 & \hbox{ on }\partial A
    \end{cases}\]
    We have already noticed that $z\in X^n$, it is left to show that $z(r,\varphi,\theta)$ is nonincreasing w.r.t. $\varphi\in(0,2\pi/n)$. So we look at the Laplace equation in radial coordinates and we check that $\zeta=\partial_{\varphi}z\le 0$ for $\varphi\in(0,\pi/n)$.

    If $\n=2$ we have
    \[\begin{cases}
    -\partial^2_{rr}z-\frac 1r \partial_r z-\frac 1{r^2}\partial^2_{\varphi\varphi} z=|g|^{p-1} g & \hbox{ as } (r,\varphi)\in (a,b)\times [0,2\pi)\\
      z(a,\varphi)=z(b,\varphi)=0 &  \forall \varphi\in [0,2\pi),
    \end{cases}\]
    and then
    \[\begin{cases}
    -\partial^2_{rr} \zeta-\frac 1r \partial_r \zeta-\frac 1{r^2}\partial^2_{\varphi\varphi} \zeta =p|g|^{p-1} g_\varphi  & \hbox{ as } (r,\varphi)\in (a,b)\times(0,\pi/n) , \\
    \zeta(a,\varphi)=\zeta(b,\varphi)=0 & \hbox{ for all } \varphi\in (0, {\pi}/n), \end{cases}\]
    with $g_{\varphi}=\partial_{\varphi}g\le 0$ in $(a,b)\times(0,\pi/n)$ because $g\in K^n$.
    Moreover $\zeta(r,0)=0$ because $z\in X^n$ is even w.r.t.~$\varphi$, and also $\zeta(r,\pi/n)=0$ because $z$ is even and $2\pi/n$ periodic, so that $z(r,{\pi}/n+ \varphi)=z(r,{\pi}/n- \varphi)$.\\
    Summing up
    \[\begin{cases}
    -\Delta\zeta \le 0 & \mbox{ in } (a,b)\times (0,\pi/n) , \\
    \zeta=0 & \mbox{ on } \partial\left((a,b)\times (0,\pi/n)\right),
    \end{cases}\]
    and the maximum principle yields that $\zeta\le 0$ in $(a,b)\times (0,{\pi}/n)$.
    
 For a generic $\n\ge 3$, the Laplace operator can be written in the hyperspherical coordinates $(r,\varphi,{\theta})$ as
    \[\begin{array}{c}
      \Delta = \dfrac{1}{r^{\n\!-\!1}}\partial_r\left(r^{\n-1}\partial_r z\right)  + \dfrac{1}{r^2}\left( c_0({\theta}) \partial^2_{\varphi\varphi} +  \sum\limits_{k=1}^{\n-2} c_k({\theta})
\partial_{\theta_k}\left(\sin^k\theta_k\partial_{\theta_k}\right)\right) , \\
       c_0({\theta}) =  \prod\limits_{k=1}^{\n-2} \dfrac{1}{\sin^2\theta_k}, \qquad \qquad c_k({\theta}) = \dfrac{1}{\sin^k\theta_k}\prod\limits_{h=k+1}^{\n-2} \dfrac{1}{\sin^2\theta_h} .
    \end{array}\]
    So it is easily seen that $\zeta=\partial_{\varphi} z$ fulfills
    \[\begin{cases}
    -\Delta \zeta =p|g|^{p-1} \partial_{\varphi}g  & \hbox{ in } A , \\
    \zeta(a,\varphi,\theta)=\zeta(b,\varphi,\theta)=0 & \hbox{ for all } (\varphi,\theta)\in (0,2\pi)\times(0,\pi)^{\n\!-\!2} ,
    \end{cases}\]
    with $\partial_{\varphi}g\le 0$ in $A_n=(a,b)\times(0,\pi/n)\times(0,\pi)^{\n\!-\!2}$ because $g\in K^n$.
    Moreover $\zeta(r,0,\theta)=0$ because $z\in X^n$ is even w.r.t.~$\varphi$, and also $\zeta(r,\varphi/n,\theta)=0$ because $z$ is even and $2\pi/n$ periodic, so that $z(r,{\pi}/n+ \varphi,\theta)=z(r,{\pi}/n- \varphi,\theta)$.
    Lastly if $\theta\in \partial\left((0,\pi)^{\n\!-\!2}\right)$, then $(r,\varphi,\theta)$ belongs to the $\n\!-\!2$-hyperplane of Cartesian equations $x_1=x_2=0$ (see \eqref{sphericalcoord}). Then actually the point $(r,\varphi,\theta)=(r,0,\theta)$ does not change with $\varphi$, so that $z(r,\varphi,\theta)=z(r,0,\theta)$ for all $\varphi$ implying that $\zeta(r,\varphi,\theta)=0$.
    \\
   Eventually we have
    \[\begin{cases}
    -\Delta\zeta \le 0 & \mbox{ in } A_n, \\
    \zeta=0 & \mbox{ on } \partial A_n,
    \end{cases}\]
    and the maximum principle yields again that $\zeta\le 0$ in $A_n$, which ends the proof.
  \end{proof}
  
  \begin{remark}\label{phibuono}
      Among spherical coordinates, the angle $\varphi$ actually plays a special role when interested in Laplace operator, because it is the only variable with the property $\partial_{\varphi}\Delta u = \Delta \partial_{\varphi} u$.
 Extension to other nonlinear terms can be tried whenever they are monotone w.r.t.~$u$, namely $\partial_u f(u)\ge 0$. Indeed in that case the operator $T(u)= \Delta^{-1}f(u)$ preserves the cones of type $K^n$.
  \end{remark}
  
Our aim is to show that  a continuum of nonradial solutions contained in $K^n$ moves away from the branch of radial solutions and this continuum is not compactly contained in $(1,+\infty)\times C^{1,\gamma}(A)$.
A crucial role is played by  an interesting property of the index of fixed point in cones in \cite{D83}: naive intuition expects that at each value of $p$ such that $\nu_i(p)+\lambda_n$ changes sign, then the index changes, but this is the case only for $i=1$, indeed. Roughly speaking, when $\nu_i(p)+\lambda_n$ changes sign for some $i> 1$  then $\nu_1(p)+\lambda_n$ stays negative and therefore the index in the cone $K^n$ is constantly zero.
Next Lemma explains this fact in major details.

Following the notations by Dancer, we introduce 
\begin{align*}
  W_{u_p}:=& \{v\in X^n\ u_p+\gamma v\in K^n\ \text{ for some }\gamma>0\}, \\
S_{u_p}:=& \{v\in W_{u_p}\ : -v\in  W_{u_p}\},
\end{align*}
and denote by $\tilde V$ the orthogonal complement to $S_{u_p}$ in $X^n$ (here orthogonality is meant in $H^1_0$) and by $\tilde T'$ the restriction of $T'$ to $\tilde V$.
It has to be remarked that, as $u_p$ is radial and so does not depend by the angle $\varphi$, then $v\in S_{u_p}$ if and only if {\emph{$v$ does not depend on $\varphi$}} either. Moreover any function which is not constant w.r.t. the angle $\varphi$ (but periodic) belongs to $\tilde V$.

\begin{lemma}\label{soloi=1}
 Let $u_p$ be an isolated fixed point for the map $T$ (i.e. $I-T'$ is invertible), then 
\[
  \mathrm{index}_{K^n}(T( p,\cdot),u_p)=\left\{\begin{array}{ll}
0 & \text{ if }\nu_1(p)+\lambda_n<0 , \\[.1cm]
  \mathrm{index}_{X^n}(T( p,\cdot),u_p) = & \\
{\mathrm{deg}}_{X^n}(I-T(p,\cdot),{\mathcal O}_{p},0)=\pm 1
& \text{ if }\nu_1(p)+\lambda_n>0. \end{array}\right.
\]
\end{lemma}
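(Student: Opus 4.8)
The plan is to invoke Dancer's fixed-point index theory in cones, specifically the computation of $\mathrm{index}_{K^n}(T(p,\cdot),u_p)$ in terms of the linearization $T'=T'(p,u_p)$ and the geometry of the cone $K^n$ at the point $u_p$. The key structural fact, already observed in the text preceding the statement, is that since $u_p$ is radial, a function $v\in X^n$ lies in $S_{u_p}$ precisely when $v$ is independent of $\varphi$, while every function depending nontrivially (but periodically) on $\varphi$ lies in $\widetilde V$. Thus the splitting $X^n = S_{u_p}\oplus\widetilde V$ separates the ``$\varphi$-independent'' directions from the ``$\varphi$-oscillating'' ones, and the projected operator $\widetilde T'$ acts on $\widetilde V$. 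The crucial point to extract from the eigenfunction description \eqref{eigenfunction_formula} is that an eigenfunction $w_{ij}=\phi_i(r)Y_j(\varphi,\theta)$ of the linearized problem that genuinely depends on $\varphi$ (equivalently, contributes to $\widetilde V\cap X^n$) must have $j$ a positive multiple of $n$, because of the $2\pi/n$-periodicity and evenness constraints defining $X^n$; in particular the smallest relevant Laplace-Beltrami eigenvalue is $\lambda_n$, and the eigenvalues of $\widetilde T'$ on $\widetilde V$ are governed by $\nu_i(p)+\lambda_j$ with $j\ge n$.

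The first step is the case $\nu_1(p)+\lambda_n<0$. Here I would show that $\widetilde T'$ has spectral radius $>1$ on $\widetilde V$, i.e.~$1$ lies below the top of the spectrum of $\widetilde T'$: indeed the eigenvalue $\tilde\mu_{1n}=\nu_1(p)+\lambda_n$ of the weighted problem \eqref{eq:eigenvaluew} being negative means the corresponding eigenvalue of $T'$ (which, recalling $T(p,v)=(-\Delta)^{-1}(|v|^{p-1}v)$ and the relation between \eqref{eq:eigenvalue} and \eqref{eq:eigenvaluew}, is $>1$) exceeds $1$, and its eigenfunction $w_{1n}=\phi_1(r)Y_n(\varphi,\theta)$ genuinely depends on $\varphi$, hence sits in $\widetilde V$. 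By Dancer's criterion (\cite[Theorem 1 and Corollary 2]{D83} or the corresponding statement in \cite{D92}), if $\widetilde T'$ has an eigenvalue $>1$ with eigenvector in $\widetilde V$ — more precisely, since $\widetilde T'$ is a compact linear operator, one checks $r(\widetilde T')>1$ — then $\mathrm{index}_{K^n}(T(p,\cdot),u_p)=0$. This uses that $u_p$ is an isolated fixed point, so $I-T'$ is invertible and $1$ is not an eigenvalue of $\widetilde T'$, which holds by the nondegeneracy hypothesis together with \eqref{i}-type isolation.

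The second step is the case $\nu_1(p)+\lambda_n>0$. Now $\nu_1(p)+\lambda_j>0$ for all $j\ge n$ (as $\lambda_j$ is increasing and $\nu_i(p)\ge\nu_1(p)$ for $i\ge1$ only helps for the lowest $i$; one must also note $\nu_i(p)+\lambda_j>0$ for $i\ge 2$, $j\ge n$ follows once $\nu_1(p)+\lambda_n>0$ since $\nu_i(p)$ corresponding to more nodal zones... — actually here I would argue directly: the eigenvalues of $\widetilde T'$ are $(1-\nu_i(p)-\lambda_j)^{-1}$-type expressions with $j\ge n$, and $\nu_i(p)+\lambda_j>0$ forces all of them to be $<1$, so $r(\widetilde T')<1$). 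By the other half of Dancer's index formula, when $\widetilde T'$ has spectral radius $<1$ one gets $\mathrm{index}_{K^n}(T(p,\cdot),u_p)=\mathrm{index}_{X^n}(T(p,\cdot),u_p)$, which in turn equals the Leray–Schauder degree $\mathrm{deg}_{X^n}(I-T(p,\cdot),{\mathcal O}_p,0)=(-1)^{m_{X^n}(p)}=\pm1$ as already used in the proof of Theorem \ref{teo:bifurcation}. The main obstacle I anticipate is the careful verification that the hypotheses of Dancer's theorems are met in this cone setting — in particular that $K^n$ is a cone with nonempty interior relative to $X^n$ (or that $u_p$ lies in a face of the right dimension), that $S_{u_p}$ is exactly the $\varphi$-independent subspace, and that $T'$ leaves $\widetilde V$ invariant — together with correctly translating the sign of $\nu_1(p)+\lambda_n$ into a statement about whether $r(\widetilde T')$ is $>1$ or $<1$. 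Once the spectral dichotomy on $\widetilde V$ is pinned down, the conclusion is a direct application of \cite{D83}.
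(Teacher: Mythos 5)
Your proposal is correct and follows essentially the same route as the paper: both reduce the computation to Dancer's index theorem in \cite{D83}, identify $S_{u_p}$ with the $\varphi$-independent functions and $\tilde V$ with the $\varphi$-dependent ones, and use the fact that eigenfunctions $\phi_i(r)Y_j$ lying in $\tilde V\cap X^n$ force $j$ to be a multiple of $n$, so that the sign of $\nu_1(p)+\lambda_n$ decides the dichotomy. The only cosmetic difference is that you phrase the criterion via the spectral radius of $\tilde T'$, whereas the paper verifies Dancer's ``property $\alpha$'' through the monotone family of first eigenvalues $\Lambda_t$ of $-\Delta-tp|u_p|^{p-1}$ on $\tilde V$ (following \cite{D92}) and exhibits the explicit eigenfunction $\psi_1(r)\cos(n\varphi)\prod_k(\sin\theta_k)^n$; these are equivalent formulations of the same spectral fact.
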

\begin{proof}
Theorem 1 in \cite{D83} states that
\[
\mathrm{index}_{K^n}(T( p,\cdot),u_p)= \begin{cases}
  0 &   \hbox{ if } T'(p,\cdot)  \mbox{ has the property $\a$}, 
  \\
 \mathrm{index}_{X^n}(T( p,\cdot),u_p) = & \\
{\mathrm{deg}}_{X^n}(I-T(p,\cdot),{\mathcal O}_{p},0)=\pm 1 & \mbox{ otherwise.}
\end{cases}\]
It is therefore needed to read the so called ``property $\alpha$'' in relation with the weighted eigenvalue problem \eqref{eq:linauto}, precisely to show that it holds if and only $\nu_1(p)+\lambda_n<0$.
\\
Several characterizations of the so called ``property $\alpha$'' are provided in Lemma 3 and the following Remark in  \cite{D83}, in particular  $T'$ has the Property $\a$ if 
there exists $t\in (0,1)$ and a $v\in \overline W_{u_p}\setminus S_{u_p}$ which solves
    \[-\Delta v = tp|u_p|^{p-1}v.\]
Following the proof of \cite[Theorem 1]{D92}, we look at the family of eigenvalue problems
\begin{equation}\label{lambdat}
  -\Delta v -t p |u_p|^{p-1}v = \Lambda v \qquad \text{ in } A.
  \end{equation}
For every $t\in[0,1]$, let $\Lambda_t$ be the first eigenvalue for \eqref{lambdat} in $\tilde V$. The variational characterization yields that $\Lambda_t$ is strictly decreasing w.r.t.~$t$.
If $T'$ has the property $\alpha$, then $\Lambda_t\le 0$ for some $t<1$ and therefore $\Lambda_1<0$. This in turn means that \eqref{eq:eigenvalue} has a negative eigenvalue with related eigenfunction in $\tilde V$, and therefore the same must hold for  \eqref{eq:eigenvaluew}, in view of \cite[Lemma 2.1]{GGPS11}.
It has already been remarked that  its eigenvalues  are $\tilde\mu_{i,j}(p)=\nu_i(p)+\lambda_j$ with eigenfunctions described by \eqref{eigenfunction_formula}.
Remembering the description of the eigenfunctions of the Laplace Beltrami operator recalled in the proof of Theorem \ref{teo:bifurcation}, $w_{ij}\in X^n$ implies that $j$ is a multiple of $n$, then $\mu_1(p)+\lambda_n\le \tilde\mu_{i,j}(p)<0$.
\\
On the other hand, if $\mu_1(p)+\lambda_n<0$, then the function
\[ v=\psi_1(r) \prod_{k=1}^{\n-2}G_n^n(\cos \theta_k,k-1) \cos(n\varphi)  = \psi_1(r) \cos(n\varphi)  \prod_{k=1}^{\n-2}\left( \sin \theta_k\right)^n\]
 is an eigenfunction for \eqref{eq:eigenvaluew} related to the eigenvalue $\mu_1(p)+\lambda_n<0$.  Because $v\in\tilde V$,  the eigenvalue $\Lambda_1$ of \eqref{lambdat} is negative, and therefore also $\Lambda_t<0$ for some $t\in(0,1)$, which means that $T'$ has the property $\alpha$.
\end{proof}

Since the change in the Leray-Schauder degree (computed as index of isolated solutions) is the key ingredient in the proof of the bifurcation results then all the standard results can be proved in the cone $K^n$ at the values of the parameter $p$ at which $\nu_1(p)+\l_n$ changes sign. 

\

We are in position to proof our main result

\begin{proof}[Proof of Theorem \ref{teo:bifurcationincones}]
Starting from Lemma \ref{soloi=1} we have that the Leray Schauder degree in the cone $K^n$ changes at the values of $p$ at which 
$\nu_1(p)+\l_n$ changes sign (and this happens due to \eqref{asymptoticnuinfty}, \eqref{asymptoticnu1l} and \eqref{asymptoticnu1m} for $n\geq \bar{n}$ or for any $n\geq 1$ when $m=1$).

\

This change in the index implies a change in the Leray Schauder degree in the cone $K^n$ and this implies the global bifurcation.
Indeed one can reproduce the proof of the Rabinowitz alternative as in \cite{G} Theorem 3.3 from Step 1 (which holds only in the case of positive solutions, i.e. $m=1$) to Step 3. Letting $C_n$ be the closed connected component of $\Sigma^m_n$ that contains $(p_n,u_{p_n})$ then we get that or $C_n$ is unbounded in the product space $(1,+\infty)\times K^n$ or it intersects $\{1\}\times K^n$ or, following Step 4 in Theorem 3.3 in \cite{G}, it is bounded in  $(1,+\infty)\times K^n$ and it must intersect the curve of radial solutions $S^m$  in another point at which $\nu_1(p)+\l_n$ changes sign. This follows since it has to be a point at which the Leray Schauder degree, and hence the index of fixed point in the cone,  changes. 

\

Finally we can apply Proposition 3.2 of \cite{G16} and, if $C_n$ is bounded then the number of points on $C_n$ at which $\nu_1(p)+\l_n$ changes sign has to be even. Since $\nu_1(p)+\l_n>0$ for $p$ close to $1$ while $\nu_1(p)+\l_n<0$ for $p$ large enough then the points at which $\nu_1(p)+\l_n$ changes sign are odd and so there is at least one of these points at which alternative $i)$, $ii)$ or $iii)$ holds. 
Finally when case $iii)$ occurs, i.e. the component $C_n$ intersects $\{1\}\times K^n$, then \cite[Theorem 1.4]{Gro09} applies and item iii) follows concluding the proof.

\end{proof}


\begin{thebibliography}{99}

  
  \bibitem[AG14]{AG}{\sc A.L. Amadori and F. Gladiali}, Bifurcation and symmetry breaking for the H\'enon equation. {\em Adv. Differential Equations} {\bf 19} (2014), 755-782.

\bibitem[AG16]{AG16}{ \sc A.L. Amadori and F. Gladiali},
A nonradial bifurcation result with applications to supercritical problems
{\em Journal of Mathematical Analysis and Applications} {\bf 440/1} (2016),  106-126. doi: 10.1016/j.jmaa.2016.03.034

\bibitem[AGG]{AGG16} {\sc A.L. Amadori, F. Gladiali, and M. Grossi},
  Nodal solutions for Lane-Emden problems in almost-annular domains  {\em arXiv}:1605.06634 (2016)

\bibitem[AG]{AGnodalHenon} {\sc A.L. Amadori and F. Gladiali}, in preparation.

\bibitem[BW93]{BW}{\sc T. Bartsch, M. Willem}, 
Infinitely many radial solutions of a semilinear elliptic problem on ${\mathbb R}^N$,
{\em Arch.~Rational Mech.~Anal.}, {\bf 124} (1993), 261-276.


  \bibitem[D83]{D83}{\sc E.N. Dancer}, On the indices of fixed points of mappings in cones and applications.
    {\em Journal of Mathematical Analysis and Applications} {\bf 91/1} (1983), 131-151, doi:10.1016/0022-247X(83)90098-7.

 \bibitem[D92]{D92}{\sc E.N. Dancer}, 
Global breaking of symmetry of positive solutions on two-dimensional annuli. {\em Differential Integral Equations} {\bf 5/4} (1992), 903-913.

\bibitem[dMIP16]{dMIP16} {\sc F. De Marchis, I. Ianni, and F. Pacella},
Exact Morse index computation for nodal radial solutions of Lane--Emden problems. {\em Mathematische Annalen} (2016), 1-43, 
doi:10.1007/s00208-016-1381-6.

\bibitem[dMIP]{dMIP} {\sc F. De Marchis, I. Ianni, and F. Pacella},
A Morse index formula for radial solutions of Lane-Emden problems. (2016) {\em arXiv}:1605.03357.

\bibitem[Gla10]{G} {\sc F. Gladiali}, A global bifurcation result for a  semilinear elliptic equation, {\em  Journ. Math. Anal.  Appl.} {\bf 369} (2010), 306-311.

\bibitem[Gla16]{G16} {\sc F. Gladiali}, Separation of branches of $O(N-1)$-invariant solutions for a semilinear elliptic equation, (2016) {\em arxiv:1602.05374  }

\bibitem[GGN13]{GGN13} {\sc F.~Gladiali, M.~Grossi, and S.~Neves}, {Nonradial solutions for the H\'enon equation in $\R^N$}. 
  {\em Advances in Mathematics}, {\bf 249} (2013), {1-36}, doi:{10.1016/j.aim.2013.07.022}.

\bibitem[GGPS11]{GGPS11} {\sc F.~Gladiali, M.~Grossi, F.~Pacella, and  P.N.~Srikanth}, 
Bifurcation and symmetry breaking for a class of semilinear elliptic equations in an annulus, 
{\em Calc.~Var.~Partial Differential Equations} {\bf 40} (2011),  n. 3-4, 295-317, doi:10.1007/s00526-010-0341-3.

\bibitem[Gro09]{Gro09}
  {\sc M.~Grossi}, On the shape of solutions of an asymptotically linear problem. {\em Ann.~Sc.~Norm.~Sup.~Pisa} {\bf  VIII} (2009), 429-449.


\bibitem[Ni82]{Ni} {\sc W. M. Ni}, A Nonlinear Dirichlet Problem on the Unit Ball and Its Applications,
 {\em Indiana Univ. Math. J.} {\bf 31}, (1982), 801-807.

\bibitem[NN85]{NN} {\sc W.M. Ni, R. D. Nussbaum}, Uniqueness and nonuniqueness for positive radial solutions of $\Delta u+f(u,r)=0$,  {\em Comm. Pure Appl. Math.,} {\bf 38}, (1985), 67-108.

\bibitem[SW86]{SW} {\sc J. Smoller and A. Wasserman}, Symmetry-breaking for solutions of semilinear elliptic equations with general boundary conditions, {\em Comm. Math. Phys. } {\bf 105} (1986) 415-441.

  \bibitem[W96]{W} {\sc N. Wheeler}, Algebraic theory of spherical harmonics {\em Reed College Physics Department} (1996).
\end{thebibliography}
\end{document}

$$-\frac{\partial^2 z}{\partial r^2}-\frac 2r \frac{\partial z}{\partial r}-\frac 1{r^2}\left(\sin^2\theta  \frac{\partial^2 z}{\partial \theta ^2}+\sin \theta \cos \theta  \frac{\partial z}{\partial \theta} +\frac 1{\sin ^2 \theta}  \frac{\partial^2 z}{\partial \varphi ^2}\right)$$
con le condizioni 
$$z(a,\varphi,\theta)=z(b,\varphi,\theta)=0 \quad \forall (\varphi,\theta)\in [0,2\pi)\times (0,\pi).$$
Derivando rispetto a $\varphi$ abbiamo che $z_\varphi=\frac{\partial z}{\partial \varphi}$ verifica:
$$-\frac{\partial^2 z_\varphi}{\partial r^2}-\frac 2r \frac{\partial z_\varphi}{\partial r}-\frac 1{r^2}\left(\sin^2\theta  \frac{\partial^2 z_\varphi}{\partial \theta ^2}+\sin \theta \cos \theta  \frac{\partial z_\varphi}{\partial \theta} +\frac 1{\sin ^2 \theta}  \frac{\partial^2 z_\varphi}{\partial \varphi ^2}\right)=p|g|^{p-1} g_\varphi\quad \hbox{ in }(a,b)\times [0,2\pi)\times (0,\pi)$$
ovvero $z_\varphi$ verifica
$$-\Delta z_\varphi=p|g|^{p-1} g_\varphi \quad \hbox{ in }A.$$
Ora, $g_\varphi\leq 0$ in $(a,b)\times (0,\frac{\pi}j)\times (0,\pi)$ dalla definizione di $K^j$. Inoltre dalle condizioni iniziali abbiamo
$$z_\varphi(a,\varphi,\theta)=z_\varphi(b,\varphi,\theta)=0 \quad \forall (\varphi,\theta)\in [0,2\pi)\times (0,\pi).$$ 
Inoltre dalla parit\'a in $\varphi$ e dalla periodicit\'a (come nel caso $N=2$) abbiamo
$$z_\varphi(r,0,\theta)=z_\varphi(r,\frac{\pi}j,\theta)=0 \quad \forall (r,\theta)\in (a,b)\times (0,\pi).$$
Infine $z(r,\varphi,0)=z(0,0,r)$ non dipende da $\varphi$, quindi ancora  $z_{\varphi}(r,\varphi,0)=0$. Allo stesso modo $z(r,\varphi,\pi)=z(0,0,-r)$ non dipende da $\varphi$, quindi ancora  $z_{\varphi}(r,\varphi,\pi)=0$. \\
Quindi $-\Delta z_\varphi\leq 0$ in $(a,b)\times [0,\frac{\pi}j)\times (0,\pi)$, mentre sul bordo di $(a,b)\times [0,\frac{\pi}j)\times (0,\pi)$ si ha $z_\varphi=0$. Dal principio di massimo debole ho la tesi.

    \section{caso N=3}
In $N=3$ le armoniche sferiche sono date da 
$$Y_k(\varphi,\theta)=P_k^l(\cos\theta)\left(\cos l\varphi+\sin l\varphi\right).$$
$P_k^l$ sono i polinomi di Legendre e dove $\theta\in[0,\pi)$ mentre $\varphi\in[0,2\pi)$.
We look for solutions $u$ of (2.1) in the space
$$X^n\ : =\left\{u\in     C^{1,\gamma}_0(A)  \ : \ u=u(r,\varphi,\theta)\ \hbox{ s.t. }u  \hbox{ is } \frac{2\pi}k  \hbox{ periodic  in }\varphi\right\}.$$
L'operatore $T$ definito prima mappa $X^n$ in $X^n$ come prima. \\
Definisco il cono
$$K^j:=\{u\in X_j \hbox{ tale che } u \hbox{ \'e decrescente in }\varphi \hbox{ in } (a,b)\times [0,\frac{\pi}j)\times (0,\pi)\}.$$

  Assume by contradiction that such a sequence does exist in $\Sigma_n$, and let $v_k:=u_k/\|u_k\|_{\infty}$ (\textcolor{red}{notice that $u_k$ can not be null because..}). Now
\begin{equation}\nonumber
\begin{cases}
-\Delta v_k= \| u_k\|_{\infty}^{p_k-1}|v_k|^{p_k-1}v_k & \hbox{ in }A\\
v_k=0 & \hbox{ on }\partial A.
\end{cases}
\end{equation}
This implies that $\| u_k\|_{\infty}^{p_k-1}$ can not go to zero otherwise
 $v_k$ would converge uniformly to zero and this is a contradiction with $\| v_k\|_{\infty}=1$. \\
Therefore, up to a subsequence, $\| u_k\|_{\infty}^{p_k-1}\rightarrow\l\in(0,1]$ and $v_k$ converges uniformly in $A$ to a function $ v\in K^n$.
Let us show that
\begin{equation}
\left(|v_k|^{p_k-1}-1\right)v_k \to 0 .
\end{equation}
For any fixed $k$, we have $\left(|v_k|^{p_k-1}-1\right)v_k =0$ if $v_k= 0$, otherwise
  \begin{align*}
    \left|\left(|v_k|^{p_k-1} -1\right)v_k\right|\le& (p_k-1)\left|\log|v_k|\int_0^1|v_k|^{1+t(p_k-1)} dt \right| \\  \le& c (p_k-1) |v_k|^{1/2} \le   c (p_k-1) .
  \end{align*}
So obviously $\| u_k\|_{\infty}^{p_k-1}|v_k|^{p_k-1}v_k \to \lambda v$ and $v$  solves
\begin{equation}\nonumber
\begin{cases}
-\Delta {v}=\l {v} & \hbox{ in } A\\
{v}=0 & \hbox{ on }\partial A.
\end{cases}
\end{equation}
So $\lambda$ is an eigenvalue for the Laplace equation on $A$ with eigenfunction $v\in K^n$.
\textcolor{red}{$v$ can be decomposed along the spherical harmonics as
  \[
  v(r,\varphi,\theta)= \psi_0(r)+ \mathop{\psi_1^{i_1\dots i_{\n\!-\!3}}(r)\prod\limits_{h=1}^{\n\!-\!2}}\limits_{\substack{i_0\le i_1\dots\le i_{\n\!-\!2}\\ i_0=0, \; i_{\n\!-\!2} =n}} G_{i_{h}}^{i_{h-1}}(\cos\theta_h,h-1) +  \psi_1(r) \prod\limits_{h=1}^{\n-2}G_n^n(\cos\theta_h,h-1)\cos(n\varphi)
  \]}
  with $\psi_0(r)=\psi_1^{i_1\dots i_{\n\!-\!3}}(r)=\psi_1(r)=0$ for $r=a,b$ and $\psi_1\ge 0$. Here $G$ stands for the Gegenbauer polynomial.
  Hence $0$ and/or $n(\n-2+n)$ has to be eingenvalues of the Sturm-Liouville problem
  \begin{equation}\label{sllambda}
    \begin{cases} \psi''+\frac{\n-1}{r}\psi' +\left(\lambda+\frac{\nu}{r^2}\right)\psi = 0 & a<r<b , \\
      \psi(a)=\psi(b)=0.&
      \end{cases}
  \end{equation}

  The following possibilities arise:
  \begin{enumerate}
  \item $0$ is an eigenvalue, but not $n(\n-2+n)$. Then $\lambda$ is a radial eigenvalue and, since the nodal regions ``can not disappear'', then $\lambda\ge \lambda_m$ (se $\lambda=\lambda_m$ si pu\`o pensare di dimostrare che per $p$ vicino a 1 c'\`e solo la soluzione radiale? (il che tra l'altro escluderebbe il caso i) dell'alternativa di Rabinowitz) Non vedo bene come, mi pare che la dimostrazione per le soluzioni positive qui non torni.
    \item $n(\n-2+n)$ is an eigenvalue. Non credo possa essere mai il primo autovalore (certamente non lo \`e se anche 0 \`e autovalore). Ma allora le $u_k$ vanno sul bordo del cono. E quindi???? 
    \end{enumerate}

    In force of \eqref{asymptoticnuinfty} and \eqref{asymptoticnu1l}, for all sufficiently large $n\in\N$ there exist at least one $p_n \in (1,+\infty)$ such that
\begin{align}\label{degeneracypoint}
  \nu_1(p_n) = -\lambda_n , \\
  \label{nu1change}
  \left(\nu_1(p_n-\delta) +\lambda_n \right) \left(\nu_1(p_n+\delta) +\lambda_n \right) < 0
\end{align}
for sufficiently small $\delta>0$.
Since the degeneracy points are isolated from Lemma \ref{lemmaTX} we can also take that 
\begin{equation}\label{i}
  \nu_i(p) \neq -\lambda_j \; \mbox{ for all $i , j \in \N$, \, if $p\neq p_n$}
\end{equation}
for $p\in[p_n -\delta, p_n + \delta]$. Besides the degeneration in $p_n$ can have a quite involved structure, but by its characterization \eqref{il} and the fact that $\nu_1(p)<\nu_i(p)$, we have for sure that
  \begin{equation}\label{ii}
  \begin{array}{c}\mbox{if $\nu_i(p_n ) = -\lambda_j$ for some $i\neq 1$, then} \\ \mbox{$i\in\{2,\dots,m\}$ and $j\in\{1,\dots  n -1\}$.}\end{array}
\end{equation}
\textcolor{red}{inserire il cambio di segno!!\\}
  Assume by contradiction that $(p_n, u_{p_n})$ is not a bifurcation point in $W$ (for $W=C^{1,\gamma}_0(A)$ or $X^n$). Then there exists $\delta >0$ so that properties \eqref{nu1change}, \eqref{i}, \eqref{ii} hold, and neighborhoods ${\mathcal O}_p$ of $u_{p}$ in $W$ such that $u-T(p,u)\neq 0$ for every $(p,u)$ in the boundary of the set $\left\{ (q,v) \, : \, q\in (p_n-\delta, p_n+\delta) , \, v\in {\mathcal O}_q\right\}$.
Hence
\[ {\mathrm{deg}}_W(I-T(p_n-\d,\cdot),{\mathcal O}_{p_n-\d},0)={\mathrm{deg}}_W(I-T(p_n+\d,\cdot),{\mathcal O}_{p_n+\d},0).\]
But by \eqref{i} we know that ${\mathrm{deg}}_W(I-T(p_n\pm\d,\cdot),{\mathcal O}_{p_n\pm\d},0)= \mathrm{index}_W(T(p_n\pm\d,\cdot),{u}_{p_n\pm\d})=  (-1)^{m_W(p_n\pm \d)}$,
so that $|m_W(p_n +\d) - m_W(p_n -\d)|$ must be an even number.
Here $m_W(p)$ stands for the Morse index of  $u_p$ in $W$, i.e. the number of negative eigenvalues of \eqref{eq:eigenvalue}, each multiplied by the number of respective eigenfunctions contained in $W$.
On the other hand $m_W(p)$ is equal to the number of linearly independent eigenfunctions of \eqref{eq:eigenvaluew} in $W$ corresponding to negative eigenvalues. 
It is easily seen that the eigenvalues of \eqref{eq:eigenvaluew} are $\tilde\mu = \nu_i(p)+\lambda_j$ and the related  eigenfunctions  are of type 
\begin{equation}\label{eigenfunction_formula}
  w(r,\phi,\theta) = \phi_i(r) \, Y_j(\phi,\theta),
  \end{equation}
where $\phi_i$ is an eigenfunction of \eqref{eq:linauto} related to $\nu_i(p)$ and $Y_j$ is an eigenfunction
of the Laplace-Beltrami operator on ${\mathbb S}^{\n\!-\!1}$ related to the eigenvalue $\lambda_j$.
Hence $m_W(p)$ is equal to the number of $Y_j\in W$ so that $\nu_i(p)+\lambda_j<0$, and by \eqref{i} $|m_W(p_n +\d) - m_W(p_n -\d)|$ is the number of $Y_j\in W$ so that $\nu_i(p_n)+\lambda_j=0$. \textcolor{red}{spiegare meglio .....\\}
Taking advantage from \eqref{ii} we eventually state that if $(p_n, u_{p_n})$ is not a bufurcation point in $W$ then the linear space
\[
\begin{array}{ll}{\mathbb Y}_W = \{ Y_j \in W \; : &  Y_j \mbox{ is a Laplace-Beltrami eigenfunction related}
\\ 
   & \mbox{to $\lambda_j = \nu_i(p_n)$ for some $j=1,\dots  n $, $i=1,\dots m$ } \} \end{array}
  \]
 has even dimension.

To reach a contradiction we argue differently according to the dimension $\n$.

\subsection*{Case $\n=2$}
In this case the eigenfunctions of the Laplace-Beltrami operator in ${\mathbb S}^1$ related to the eigenvalue $\lambda_j=-j^2$ are of the form
\[ Y_j(\varphi)=A \cos(j\varphi)+ B \sin(j\varphi), \]
and they build a space of dimension 2 in $C^{1,\gamma}_0(A)$ for any $j\geq 1$.
But $Y_j\in X^n$ only if $B=0$ and $j$ is a multiple of $n$, hence
\[ {\mathbb Y}_{X^n} = \{A \cos(n\varphi) \, : \, A\in\R \} \]
has dimension 1 and we have reached a contradiction. Therefore bifurcation actually occurs in the space $X^n$ and it is global.
\subsection*{Case $\n=3$}
Now the eigenfunctions of the Laplace-Beltrami operator in ${\mathbb S}^2$ related to the eigenfunction $\lambda_j=-j(1+j)$ are of type
\[Y_j(\varphi,\theta)=\sum\limits_{\ell=0}^j  P_j^{\ell}(\cos\theta)\left(A_{\ell} \cos \ell\varphi+ B_{\ell}\sin \ell\varphi\right), \]
where $P_j^{\ell}$ are the associated Legendre polynomials.
The eigenfunctions build a subspace of $C^{1,\gamma}_0(A)$ of (odd) dimension $1+2j$ (see \eqref{dim-arm}).
If there is not any index $i=2,\dots m$ such that $\nu_i(p_n)=-j(1+j)$ for some $j$, then ${\mathbb Y}_{C^{1,\gamma}(A)}$ has odd dimension and we have reached the desired contradiction.
The same holds if there is an even number of such indexes $i$.

Otherwise, if there is an odd number of such indexes $i$, we look into ${\mathbb Y}_{X^n}$.
Now $Y_{n}\in {\mathbb Y}_{X^n}$ only if $B_k=0$ for $k=0,1,\dots  n $ and $A_k=0$ for $k=1,\dots  n -1$ (i.e.~there are 2 linearly independent eigenfunctions, corresponding to $A_0$ and $A_n$), while for $j< n $ $Y_{j}\in {\mathbb Y}_{X^n}$ only if $B_k=0$ for $k=0,1,\dots  n $ and $A_k=0$ for $k=1,\dots  j $ (i.e.~there is only 1 linearly independent eigenfunction, corresponding to $A_0$).
Then the dimension of ${\mathbb Y}_{X^n}$ is odd.
\subsection*{General case $\n\in\N$, $\n\ge 4$}
Now the eigenfunctions of the Laplace-Beltrami operator in ${\mathbb S}^{\n-1}$ related to the eigenfunction $\lambda_j=-j(\n-2+j)$ are of type
\[Y_j(\varphi,\theta)=\mathop{\sum\limits_{\ell=0}^{j} \prod\limits_{k=1}^{\n\!-\!2}}\limits_{\substack{i_0\le i_1\dots\le i_{\n\!-\!2}\\ i_0=\ell, \; i_{\n\!-\!2} =j}} G_{i_{k}}^{i_{k\!-\!1}}(\cos\theta_k,k\!-\!1) \left(A_{\ell}^{i_1\dots i_{\n\!-\!3}} \cos \ell\varphi+ B_{\ell}^{i_1\dots i_{\n\!-\!3}}\sin \ell\varphi\right), \]
where $G_{i}^{0}(\cdot,k)$ are the Gegenbauer polynomials and 
\begin{align*}
 G_{i}^{\ell}(\omega,k)  & =  (1-\omega^2)^{\frac{\ell}{2}} \frac{d^{\ell}}{d\omega^{\ell}}G_{i}^0(\omega,k) .
\end{align*}
Moreover 
\begin{align*}
  G_{i}^{\ell}(\omega,0) & =  P_{i}^{\ell}(\omega) ,  \text{ and }\\
 G_{i}^0(\omega,k) & \mbox{\;  are generated by a Taylor expansion:} \\
 &  \sum\limits_{i=0}^{\infty}G_{i}^0(\omega,k) x^i = (1-2x\omega+x^2)^{-(1+k)/2}  \\
\end{align*}
We refer to the note \cite{W} for more details about the representation of spherical harmonics functions by means of the Gegenbauer polynmials.

The eigenspace has dimension $N_j=\binom{\n+j-1}{\n-1}-\binom{\n+j-3}{\n-1}$ in $C^{1,\gamma}_0(A)$.

If  $j< n $, only the eigenfunctions not depending by $\varphi$ belong to ${X}(n)$, which gives $K_j=\binom{\n+j-3}{\n-3}$ linearly independent eigenfunctions (corresponding to  $A_0^{i_1\dots i_{\n\!-\!3}}$, where $i_1\dots i_{\n\!-\!3}$ is a ($\n-3$)-combination of  integers among $0,\dots j$, with possible repetitions).
We claim that if the dimension  of the eigenspace in $C^{1,\gamma}_0(A)$ is even/odd, the same holds in $X^n$, equivalently $N_j-K_j$ is even.
  But this is clear because $N_j-K_j=  2\binom{\n+j-3}{\n-2}$.

Besides for $j= n $ there are $K_n=1+\binom{\n+ n -3}{\n-3}$ linearly independent eigenfunctions in ${X}(n)$, because  the only eigenfunction containing $\cos  n \varphi$ adds to the ones considered for $j< n $. It follows that the dimension of the eigenspace in $X^n$ is even when the one in $C^{1,\gamma}_0(A)$ is odd, and viceversa.

Therefore, again, ${\mathbb Y}_{C^{1,\gamma}_0(A)}$ or  ${\mathbb Y}_{X^n}$ has odd dimension, and the claim follows.

\textcolor{red}{correggere questa!!} 
The same arguments of \cite[Theorem 1]{D92} apply and for all values of $p$ except that ones where $u_p$ is degenerate we have that
\[ \deg_{K^n}(I-T( p,\cdot),O_{ p},0)=\begin{cases}
\pm 1 & \hbox{ if }\mu_1(p,n)>0, \\
0& \hbox{ if }\mu_1(p,n)<0
\end{cases}\]
Here $\mu_1(p,n)$ is the first eigenvalue for \eqref{eq:eigenvalue} whose eigenfunction belongs to the interior of $K^n$.
By \cite[Lemma 2.1]{GGPS11} the number of negative eigenvalues for \eqref{eq:eigenvalue} is the same of the ones for  \eqref{eq:eigenvaluew}, and the linear space spanned by the respective eigenfunctions coincide.
Hence $\mu_1(n)<0$ if and only if $\tilde\mu_1(p,n)<0$, where $\tilde\mu_1(p,n)$ stands for  the first eigenvalue for \eqref{eq:eigenvaluew} whose eigenfunctionbelongs to the interior of $K^n$. 
Besides the eigenvalues for problem  \eqref{eq:eigenvaluew} are $\tilde\mu_{i,j}(p)=\nu_i(p)+\lambda_j$ and the eigenfunctions are described by \eqref{eigenfunction_formula}. In particular they are in the interior of $K^n$ if and only if $\cos(n\varphi)$ is an eigenfunction for the Laplace-Beltrami operator related to $\lambda_j$.
For $\n=2$ it is clear that $Y_j= A \cos j\varphi + B \sin j\varphi$, hence $\tilde\mu_1(p,n)= \nu_1(p)+\lambda_n$ and by taking $p=p_n\pm\delta$ as in the proof of Theorem \ref{teo:bifurcation} we have that only one between
$\deg_{K^n}(I-T( p_n\pm\delta,\cdot),O_{ p_n\pm\delta},0)$ is zero.

Also in higher dimension the representation of Laplace-Beltrami eigenfunction by means of Legendre/Gegenbauer polinomials (already used in the proof of Theorem \ref{teo:bifurcation})  assures that $\tilde\mu_1(p,n)= \nu_1(p)+\lambda_n$ and then the degree in the cone $K^n$ changes from $p_n-\delta$ to $p_n+\delta$.

Next standard degree arguments (applied to the degree in the cone $K^n$) give the first part of the claim.

Concerning the so called Rabinotz alternative, if $\Sigma_n$ contains a sequence $(p_k, u_k)$ with $p_k\to 1$ and $u_k$ bounded in $C^{1,\gamma}(A)$, then \cite[Theorem 1.4]{Gro09} applies and item iii) follows.$\dag$ \marginlabel{$\dag$ forse \`e il caso di spiegare da qualche parte perch\'e, a differenza delle soluzioni positive, qui non si pu\`o escludere questa possibilit\`a, magari citando \cite[Theorem 1.3]{Gro09}}

The proof is thus completed after showing that if $\Sigma_n$ is contained in a strip $[\underline{p},\overline{p}]\times C^{1,\gamma}(A)$ with $1<\underline{p}<\overline{p}<+\infty$, then its projection onto $C^{1,\gamma}(A)$ is unbounded.
To this aim we define $P_n$ as the set of $p\in(1,+\infty)$ such that $\nu_1(p)$ crosses the value $\lambda_n$, i.e.~\eqref{degeneracypoint}, \eqref{nu1change} hold. When $n$ is large $P_n$ has a finite and odd  number of elements: this comes from the fact that $\nu_1(p)$ is locally analytic by \cite[Lemma 3.4]{AGG16} and from the asympotic behaviour of $\nu_1(p)$ as $p\to +\infty$ and $p\to 1$ recalled in \eqref{asymptoticnuinfty} and \eqref{asymptoticnu1l}.  By the first part of the proof it follows that $(p, u_{ p})\in \Sigma_n$ for all $p\in P_n$, let us denote by $\cC(n,p)$ the respective connected component of $\Sigma_n$.
If all the $\cC(n,p)$ where bounded, at least one of them  should contain an odd number of points $(q,u_{q})$ with $q\in P_n$.* \marginlabel{* questo dipende dal fatto che la biforcazione \`e globale ma non l'ho scritto da nessuna parte.}
But one can see that this  is not possible by adapting an improvement of the Rabinowitz alternative (due to Ize \textcolor{blue}{la referenza \`e [N]?}) to the theory of index in cones.

\textcolor{blue}{Da qui in poi va riscritto!!! Io ho copiaincollato}\textcolor{red}{ io toglierei tutto perch\'e l'unica differenza in questa dimostrazione \'e che nella penultima formula abbiamo $\pm 1$ anzich\'e $\pm 2$ e il grado va scritto in $K^n$. Il resto \'e uguale.}

Actually by the same arguments of \cite[Theorem 3.3, Steps 3 and 5]{G},
there is a bounded open set
$\mathcal O\subset (1,+\infty)\times K^n$ such that $\cC(n,p)\subset \mathcal O$ and $\de \mathcal O
\cap \Sigma_n=\emptyset$. 
For $\mathcal O$ as above and $r>0$, consider the map
$$\begin{array}{llll}
S_r(p,v):&\hbox{ }\overline{ \mathcal O} &\rightarrow &\hbox{  } X\times \R\\
&(p,v)&\mapsto &\left(S(p,v)
,\nor v-\up \nor_X^2 -r^2\right)
\end{array}$$
where $\nor \cdot\nor_X$ stands for  the usual norm in the space $C^{1,\gamma}_0(\B )$.
Now, $\mathit{deg}\left( S_r(p,v),\mathcal O,(0,0)\right)$ is defined since
on $\de \mathcal O$ there are no  solutions of $S(p,v)=0$ different from the
radial solution $u_p$, and hence
$0=\nor v-\up \nor_X<r$ for such any solution. Furthermore the degree is
independent of $r>0$. For large $r$, $S_r(p,v)=(0,0)$ has no solutions in
$\mathcal O$, and hence has degree zero. On the other hand, for small  $r$, if
$(p,v)$ is a solution of $S_r(p,v)=(0,0)$, then $\nor v-\up\nor_X =r$, and hence
$p$ is close to one of the $p_j$, $j=1,\dots,m$. But then the sum of
local degrees of $S_r$ in the neighborhoods of each of the $p_j$ is
equal to zero, so that
\begin{equation}\label{2.sum}
0=\sum_{j=1}^m \mathit{deg}\left( S_r(p,v),\mathcal O\cap
B_{r}(p_j,u_{p_j}),(0,0)\right).
\end{equation}
In particular we choose $r<\e_0$ for $\e_0$ defined as before.
In order to compute the degree of $S_r(p,v)$ in $\mathcal O\cap B_{r}(p_j,u_{p_j})$ we
use again the homotopy invariance of the degree. Let us define
$$S_r^t(p,v)=\left(S(p,v), t(\nor v-\up\nor_X^2
-r^2)+(1-t)(2p_jp-p^2-p_j^2+r^2)\right)$$
for $t\in [0,1]$.
As before $\mathit{deg}\left( S_r^t(p,v), \mathcal O\cap B_{r}(p_j,u_{p_j}),
  (0,0)\right) $ is well defined  since there are no solutions on the
  boundary if $r$ is small (recall that $u_{p_j\pm r}$ are isolated if
  $r<\e_0$). Moreover the degree is independent of
  $t$. For $t=1$ we have $S_r^1(v,p)=S_r(p,v)$, while for $t=0$,
$S_r^0(p,v)=\left(S(p,v),2p_jp-p^2-p_j^2+r^2\right)$ and
\begin{eqnarray}
&& \mathit{deg}\left( S_r^0(p,v),\mathcal O\cap B_{r}(p_j,u_{p_j}),
  (0,0)\right)\nonumber\\
&&= \mathit{deg} \left(
  S(p,v),\mathcal O\cap B_{r}(p_j,u_{p_j}) ,0\right)\cdot \mathit{deg}\left( 2p_jp-p^2-p_j^2+r^2,
  \{|p-p_j|<r\},0\right).\nonumber
\end{eqnarray}
Now
$$\mathit{deg}\left( 2p_jp-p^2-p_j^2+r^2,
  \{|p-p_j|<r\},0\right)=1$$
for $p=p_j-r$ while
$$\mathit{deg}\left( 2p_jp-p^2-p_j^2+r^2,
  \{|p-p_j|<r\},0\right)=-1$$
for $p=p_j+r$. This implies that
\begin{eqnarray}
&&\mathit{deg}\left( S_r(p,v),\mathcal O\cap
B_{r}(p_j,u_{p_j}),(0,0)\right)=\nonumber\\
&&\mathit{deg}\left(
S(p_j-r,\cdot),\mathcal O_{p_j-r},0\right) - \mathit{deg}\left(
S(p_j+r,\cdot),\mathcal O_{p_j+r},0\right)\nonumber\\
&&=(-1)^{m(p_j-r)}-(-1)^{m(p_j+r)}\nonumber
\end{eqnarray}
where  we denote by $\mathcal O_p$ the set $\{v\in \mathcal O\, :\,
(p,v)\in \mathcal O\}$.\\
We conclude that if $(p_j, u_{p_j})$ is a Morse index changing
point then
$$\mathit{deg}\left( S_r(p,v),\mathcal O\cap
B_{r}(p_j,u_{p_j}),(0,0)\right)=\pm 2$$ while if $(p_j, u_{p_j})$ is
not a Morse index changing point then
$$\mathit{deg}\left( S_r(p,v),\mathcal O\cap
B_{r}(p_j,u_{p_j}),(0,0)\right)=0.$$ Sisssssnce the nonzero terms in
(\ref{2.sum}) correspond  only to the Morse index changing points, and since these
terms add up to zero, there must be an even number of Morse index
changing points.


, $ T'(p,\cdot)$ has the Property $\a$ if and only if the problem 
\[\left\{\begin{array}{ll}
-\Delta v = t p |u_p|^{p-1} v & \qquad \text{in } A, \\
v=0 & \qquad \text{ on } \partial A
\end{array}\right.\]
admits a solution $v$  for some $t\in (0,1)$. Equivalently  $ T'(p,\cdot)$ has the Property $\a$ if and only if the eigenvalue problem \eqref{eq:eigenvalue} has a negative eigenvalue $\mu$ with related eigenfunction in 
\[
\left\{\begin{array}{ll}
-\Delta w = p |u_p|^{p-1} w+\mu w & \qquad \text{in } A, \\
w=0 & \qquad \text{ on } \partial A
\end{array}\right.
\]
(which is exactly ) admits a solution in the interior of $K^n$ associated to a negative eigenvalue $\mu$. Finally the operator $ T'(p,\cdot)$ has the Property $\a$ if and only if the weighted eigenvalue problem
\[
\left\{\begin{array}{ll}
-\Delta w = p |u_p|^{p-1} w+\frac{\tilde \mu}{|x|^2} w & \qquad \text{in } A, \\
w=0 & \qquad \text{ on } \partial A
\end{array}\right.
\]
(which is exactly \eqref{eq:eigenvaluew}) admits a solution in the interior of $K^n$ associated to a negative eigenvalue $\tilde \mu$.
Besides the eigenvalues for problem  \eqref{eq:eigenvaluew} are $\tilde\mu_{i,j}(p)=\nu_i(p)+\lambda_j$ and the eigenfunctions are described by \eqref{eigenfunction_formula}. 
Recalling that the first eigenvalue with eigenfunction in the interior of  $K^n$ is $\nu_1(p)+\lambda_n$ with $\nu_1(p)$............then we have
\begin{equation}\label{numero}
 \mathrm{index}_{K^n}(T( p,\cdot),u_p)=\left\{\begin{array}{ll}
\pm 1 & \text{ if }\nu_1(p)+\lambda_n>0\\
0 & \text{ if }\nu_1(p)+\lambda_n<0
\end{array}\right.
\end{equation}


At all points $p_n$ where \eqref{degeneracypoint} holds,  satisfies $-\Delta \hat v-p|u_p|^{p-1}\hat v=0$, while $-\Delta \hat v- p|u_p|^{p-1}\hat v= (\nu_1(p)+\lambda_n)\hat v/|x|^2$ is positive to the left of $p_n$ and negative to the right (to fix ideas, the opposite situation could occur either). Hence for the values of $p$ to the left of $p_n$ all the eigenfunctions in $X^n$ correspond to eigenvalues of type $\mu_{i j}(p)$ where $j$ is a multiple of $n$ and therefore $\mu_{i j}(p) \ge \nu_1(p)+\lambda_n>0$. Hence the Reileigh quotient 
\[ \frac{\int_A|\nabla v|^2}{p\int_A|u_p|^{p-1}v^2}
\]
is greater or equal to $1$ in $X^n$, so that the property $\alpha$ is not satisfied and $\mathrm{index}_{X^n}(T( p,\cdot),u_p) =\pm 1$.
On the contrary  for the values of $p$ to the right of $p_n$ the Reileigh quotient
\[ \frac{\int_A|\nabla v|^2}{p\int_A|u_p|^{p-1}v^2}
\]
is less than 1 on a subspace intersecting $\overline W_{u_p}\setminus S_{u_p}$, which implies that the property $\alpha$ is satisfied \textcolor{magenta}{non sono mica convinta!} and therefore  $\mathrm{index}_{X^n}(T( p,\cdot),u_p) =0$.
}
Summing up